\documentclass[onefignum,onetabnum]{siamart190516}

\usepackage{lipsum}
\usepackage{amsfonts}
\usepackage{graphicx}
\usepackage{epstopdf}
\usepackage{algorithmic}
\ifpdf
  \DeclareGraphicsExtensions{.eps,.pdf,.png,.jpg}
\else
  \DeclareGraphicsExtensions{.eps}
\fi


\newsiamremark{hypothesis}{Hypothesis}
\crefname{hypothesis}{Hypothesis}{Hypotheses}
\newsiamthm{claim}{Claim}

\headers{Evolving Neural Network (ENN) Method}{Z. Cai and B. Hejnal}

\title{Evolving Neural Network (ENN) Method \\ for One-dimensional 
Scalar Hyperbolic Conservation Law{\small s}: \\[1mm]  I. Linear and Quadratic Fluxes\thanks{This work was supported in part by the National Science Foundation under grant DMS-2110571.}}

\author{Zhiqiang Cai\thanks{Department of Mathematics, Purdue University, 150 N. University Street, West Lafayette, IN 47907-2067 
  (\email{caiz@purdue.edu}, \email{bhejnal@purdue.edu}).}
\and Brooke Hejnal\footnotemark[2]
 }

\usepackage{amsopn}

\ifpdf
\hypersetup{
  pdftitle={ENN1},
  pdfauthor={}
}
\fi

\DeclareMathSymbol{\shortminus}{\mathbin}{AMSa}{"39}
\NewDocumentCommand{\grad}{e{_^}}{%
  \mathop{}\!
  \nabla
  \IfValueT{#1}{_{\mspace{-4mu}#1}}
  \IfValueT{#2}{^{#2}}
}

\usepackage[utf8]{inputenc}
\usepackage{bm}
\usepackage{mathtools}
\usepackage{indentfirst}
\usepackage{subfigure}
\usepackage{tcolorbox} 
\usepackage{color}
\usepackage{xcolor}
\usepackage[export]{adjustbox}
\usepackage{scalerel}
\usepackage{float}
\usepackage{enumitem}

\usepackage[normalem]{ulem}

\usepackage{algorithmic}
\Crefname{ALC@unique}{Line}{Lines}

\newcommand{\R}{\mathbb{R}}

\setlength{\topmargin}{-.3in} 
\setlength{\textheight}{8.4in}
\setlength{\textwidth}{5.9in}      
\setlength{\oddsidemargin}{0.2in}  
\setlength{\evensidemargin}{0.2in} 
\usepackage{graphicx}
\usepackage{diagbox}
\usepackage{pifont}
\newcommand{\vertiii}[1]{{\left\vert\kern-0.25ex\left\vert\kern-0.25ex\left\vert #1 
    \right\vert\kern-0.25ex\right\vert\kern-0.25ex\right\vert}}


\usepackage{tikz}
\usetikzlibrary{shapes.misc}
\usetikzlibrary{decorations.pathreplacing}

\usepackage{enumitem}
\setlist[itemize]{left=16pt} 

\def\bb{{\bf b}}

\def\bt{{\bf t}}

\def\cM{{\cal M}}

\begin{document}

\maketitle

\begin{abstract}
We propose and study the evolving neural network (ENN) method for solving one-dimensional scalar hyperbolic conservation laws with linear and quadratic spatial fluxes. The ENN method first represents the initial data and the inflow boundary data by neural networks. Then, it evolves the neural network representation of the initial data along the temporal direction. The evolution is computed using a combination of characteristic and finite volume methods. For the linear spatial flux, the method is not subject to any time step size, and 
it is shown theoretically that the error at any time step is bounded by the representation errors of the initial and boundary condition. For the quadratic flux, an error estimate is studied in a companion paper \cite{ENN2}.
Finally, numerical results for the linear advection equation and the inviscid Burgers equation are presented to show that the ENN method is more accurate and cost efficient than traditional mesh-based methods. 
\end{abstract}

\begin{keywords} 
Characteristic, Finite Volume, ReLU Neural Network, Transport Equation, Nonlinear Hyperbolic Conservation Law
\end{keywords}

\begin{AMS}
 65M08, 65M60, 65M99
\end{AMS}


\section{Introduction}
Nonlinear hyperbolic conservation laws are still computationally challenging despite the successful application of many efficient and accurate numerical methods to practical problems \cite{leveque1992numerical, GoRa:96, thomas2013numerical, hesthaven2017numerical}. There are two significant difficulties when approximating hyperbolic conservation laws. The first difficulty is that the solution of the underlying problem is often discontinuous due to a discontinuous initial condition, a discontinuous inflow boundary condition, or a shock formation. Moreover, the locations of the discontinuities are unknown {\it a priori}. The second difficulty is that the strong form of the partial differential equation is invalid where the solution is discontinuous.

Traditional numerical methods such as finite difference, finite volume, and finite element methods are based on partitions of the computational domain into uniform or quasi-uniform meshes. These methods require a sufficiently small mesh size to achieve an accurate approximation.
Furthermore, when the solution of the underlying problem is discontinuous, the mesh size must be small enough to sharply capture the discontinuities. Because computation on a fine uniform mesh can be costly, adaptive mesh refinement \cite{BurgerOliger84, shen11, hartmann02} may be used instead. However, the efficiency of adaptive mesh refinement depends heavily on {\it a  posteriori} error indicators, and there is scarce research on the development of {\it a  posteriori} error indicators for hyperbolic conservation laws \cite{burman2009posteriori, de2004least, houston1999posteriori, houston2000posteriori, liu2020adaptive}. Methods such as Arbitrary Lagrangian Eulerian (ALE) methods (see, e.g., a review paper \cite{donea04}) and moving mesh methods \cite{lucier86, holden88, miller81, herbst83} avoid fine uniform meshes by evolving the solution with a dynamic mesh. In particular, \cite{lucier86} presents a semi-discrete scheme using characteristics with the first proof of quadratic convergence. 

Traditionally, the invalidity of the strong form of the partial differential equation at discontinuities is addressed with an additional constraint, the Rankine-Hugoniot (RH) jump condition \cite{leveque1992numerical, GoRa:96}, which is needed at discontinuous interfaces. Typically, the RH jump condition is enforced numerically through a finite volume approach for conservative schemes such as Roe and ENO/WENO (see, e.g., \cite{roe1981approximate, harten1987uniformly, shu1988efficient, leveque1992numerical}). Additionally, semi-Lagrangian methods maintain a uniform grid while utilizing characteristics to evolve the solution (see, e.g, \cite{qiu11, huang16}).

Recently, neural networks (NN) have been used as a new class of approximating functions to approximate the solution to partial differential equations (see, e.g., \cite{Cai2020, Weinan18, raissi2019physics,Sirignano18}). A neural network function is a linear combination of compositions
of linear transformations and a nonlinear univariate activation function. 
As shown in \cite{Cai2021linear, Cai2023linear}, a ReLU neural network can better approximate discontinuous functions with unknown interfaces than the existing classes of approximating functions such as polynomials or piecewise polynomials. Furthermore, the space-time least-squares ReLU neural network (LSNN) method was used to solve the linear advection-reaction equation with a discontinuous solution in \cite{Cai2021linear, Cai2023linear} and nonlinear hyperbolic conservation laws in \cite{Cai2023nonlinear}.
Hence, neural networks can resolve the first difficulty for hyperbolic conservation laws by finding discontinuous interfaces and avoiding fine uniform meshes.

The purpose of this paper is to introduce a novel numerical method that is derived from the physics of conservation laws. An important physical phenomenon described by conservation laws is the transportation of an initial profile along characteristic curves.
Based on this observation, we propose the evolving neural network (ENN) method which first represents an initial profile using a neural network and then evolves the representation of the initial profile along the temporal direction. In one dimension, the initial data is a function of the spatial variable $x\in\R$, which can be approximated well by a one hidden-layer ReLU neural network or equivalently a free knot spline. The neural network approximation is uniquely determined by nonlinear parameters (breaking points, i.e., input bias) and linear parameters (output weights and bias). For linear transport problems, breaking points propagate along their characteristic lines, and the linear parameters are computed using the nodal values of the breaking points. 
For nonlinear hyperbolic conservation laws, breaking points propagate either along their characteristic lines or according to a characteristic finite volume scheme depending on their locations. 

For linear transport problems, we show that the error at any time step is bounded by the representation errors of the initial data and the boundary data. For hyperbolic conservation laws with a quadratic spatial flux, the error estimate is studied in a companion paper \cite{ENN2}.
Numerical results for benchmark test problems for the linear advection equation and Burgers' equation are presented to show that the ENN method is more accurate and efficient than traditional mesh-based methods. 

The paper is organized as follows. Section~\ref{prob-sec} introduces the one-dimensional hyperbolic conservation law. Sections~\ref{shallow-nn} and \ref{init_bdry_data} describe shallow ReLU neural networks and how they can approximate initial and boundary data. Then, the ENN method for linear problems is derived and analyzed in Section \ref{ENN-linear}. Section \ref{ENN-nonlinear} derives the ENN method for Burgers' equation, which introduces the characteristic finite volume scheme and the time step control procedure. Numerical results for benchmark test problems for both the linear advection and the inviscid Burgers equations
are presented in Section \ref{num-exp}. Finally, some conclusive remarks are made in Section \ref{conclusion}.


\section{Problem Formulation} \label{prob-sec}

Consider the one-dimensional scalar hyperbolic conservation law of the form
\begin{equation} \label{pde}
    \left\{\begin{array}{rcll}
    \dfrac{\partial }{\partial t}u(x,t)
    +\dfrac{\partial }{\partial x}f\big(u(x,t)\big) &=& 0, &\text{ in }\,\, \Omega  \times I = \R\times (0,T), \\[2mm]
    u(x,0) &=& u_0(x), &\text{ in }\,\, \Omega,
    \end{array}\right.
\end{equation}
where the initial data $u_0$ is a given scalar-valued function and $f(u)$ is the spatial flux. In this paper, we consider the linear advection equation and Burgers' equation which correspond to the respective 
linear and quadratic spatial flux functions 
\[
 f(u) =\alpha u \quad\mbox{and}\quad f(u)= \dfrac12 u^2.
\]
Computation is often done in a bounded domain 
$\Omega=(a,b)$. In this case, the following inflow boundary condition is needed as a supplement to (\ref{pde})
\begin{equation}\label{bc}
    u= g(t), \text{ on }\,\, \Gamma_{-}.
\end{equation}
Here, $\Gamma_-$ is the part of the boundary $\partial \Omega \times I$ where the characteristic curves enter the domain $\Omega  \times I$, and the boundary data $g(t)$ is a given scalar-valued function.

A characteristic curve $x(t)$ of equation (\ref{pde}) emanating from a point $(\hat{x},\hat{t})$ in $\Omega \times I$ satisfies the following ordinary differential equation
\begin{equation}\label{c-curve}
\left\{\begin{array}{rcll}
	\dfrac{dx(t)}{dt} &=& f^\prime\big(u(x(t), t)\big), & \text{ for } t\in (0,T),\\[4mm]
		x(\hat{t}) &=& \hat{x}. 
    \end{array}\right. 
\end{equation}
The derivative of the solution $u(x,t)$ of (\ref{pde}) along a characteristic curve $x(t)$ satisfies
\begin{equation}\label{u-x(t)}
\dfrac{d}{dt}u\big(x(t), t\big)= u_x \,\dfrac{dx(t)}{dt} + u_t = u_x \,f^\prime(u) + u_t=0, 
\end{equation}
where $u_x$ and $u_t$ are the partial derivatives of $u$ with respect to the spatial and temporal variables, $x$ and $t$, respectively. This implies that the solution $u(x,t)$ is a constant along a characteristic curve $x(t)$. Hence, 
\begin{equation}\label{constant}
    u\big(x(t),t\big)=u\big(\hat{x},\hat{t}\big), 
\end{equation}
which, together with (\ref{c-curve}), implies that before $x(t)$ intersects another characteristic line, it is given by the line
\begin{equation}\label{c-line}
    x(t) = x(\hat{t})+ (t-\hat{t})f^\prime\big(u\left(\hat{x},\hat{t}\right)\big).
\end{equation}


\section{One Dimensional Shallow ReLU Neural Network}\label{shallow-nn} 

In one dimension, a one hidden-layer shallow ReLU neural network with $n$ neurons generates the following set of functions
\[
\cM_n = \left\{ c_{-1}+\sum_{i=0}^n c_i \rho(\omega_i x - b_i) : \, b_i \in \mathbb{R}, \, c_i \in \mathbb{R}, \, \omega_i \in \mathbb{R} \right\},
\]
where $\{ \omega_i\}_{i=0}^n$ and $\{b_i\}_{i=0}^n$ are the weights and bias of the hidden layer and $\{c_i\}_{i=0}^n$ and $c_{-1}$ are the weights and bias of the output layer, respectively, and 
\[
\rho(x) = \max\{0, x\}=\left\{\begin{array}{cc}
  0,   & \mbox{if }\, x<0, \\[2mm]
  x,  &  \mbox{if }\, x\ge 0
\end{array}\right.
\]
is the ReLU activation function. The input weights and bias are nonlinear parameters, and the output weights and bias are linear parameters. As discussed in \cite{LiuCai1}, the $2(n+1)$ nonlinear parameters can be reduced by half without compensating the approximation power by choosing $\omega_i=1$ for all $i=0,1,\ldots,n$. Moreover, by setting $b_0=a$ and restricting the breaking points $b_i$ to the interval $(a,b)$ for $i=1,\ldots,n$, the resulting set becomes
\begin{equation}\label{func1}
\cM_n =\left\{c_{-1}+c_0(x-a)+\sum_{i=1}^nc_i\rho(x-b_i):\, b_i\in [a,b),\, c_i\in \R\right\}.
\end{equation}
 
Each neural network function $v(x;\bb)$ in $\cM_n$ is a continuous piecewise linear function with respect to its fixed partition of breaking points
\[
\Delta: \, a=b_0< b_1< \cdots < b_n< b_{n+1}=b,
\]
where $\bb=(b_1,\ldots,b_{n})$. Define the global basis functions with respect to the partition $\Delta$ as $\rho_{-1}(x)=\rho_{-1}(x;b_0)=1$ and
\[
\rho_i(x)=\rho_i(x;b_i)=\rho(x-b_i)\big|_{(a,b)}=\left\{\begin{array}{ll}
x-b_i, & x\in (b_i,b),\\[2mm]
0, & x\notin (b_i,b),
\end{array}\right. 
\]
for $i=0,1,...,n$. Then, $v(x;\bb)$ is represented as a linear combination of the global basis functions $\big\{\rho_i(x;b_i)\big\}_{i=-1}^n$
\begin{equation}\label{g-rep}
    v(x;\bb)=\sum_{i=-1}^nc_i\rho_i(x;b_i).
\end{equation}

A key step in the evolving neural network (ENN) method proposed in this paper (see Section~\ref{ENN-linear}) involves propagating breaking points along characteristic lines, where the nodal values remain unchanged (see (\ref{constant})). Hence, the solution at a given time $t$ can be more easily represented using nodal basis functions. To this end, let $h_i=b_{i+1}-b_i$ be the distance between each pair of breaking points for $i=0,1,\ldots, n$. Define the nodal basis functions with respect to the partition $\Delta$ as
\begin{eqnarray*}
\phi_{i}(x;\bb)&=&\left\{\begin{array}{ll}
\dfrac{x - b_{i-1}}{h_{i-1}}, & x\in (b_{i-1},b_i), \\[4mm]
\dfrac{b_{i+1} - x}{h_{i}}, & x\in (b_i, b_{i+1}),\\[4mm]
0, & \text{otherwise},
\end{array}\right.
\end{eqnarray*}
for $i=1,...,n$ and
\begin{eqnarray*}
\phi_0(x;\bb)=\left\{\begin{array}{ll}
\dfrac{b_1 - x}{h_0}, & x\in (b_0, b_1),\\[4mm]
0, & \text{otherwise},
\end{array} \right.
\hspace{.5cm}
\phi_{n+1}(x;\bb)&=& \left\{\begin{array}{ll}
\dfrac{x - b_{n}}{h_{n}}, & x\in (b_{n}, b_{n+1}), \\[4mm]
0, & \text{otherwise}.
\end{array}\right.
\end{eqnarray*}
For each neural network function $v(x;\bb)\in\cM_n$, let $v_i = v(b_i;\bb)$ be the nodal value of $v(x;\bb)$ at $b_i$ for $i=0,1,...,n+1$. Then, $v(x;\bb)$ can be represented with the nodal basis functions as
\begin{equation}
    v(x;\bm{b}) = \sum\limits_{i=0}^{n+1} v_i \phi_i(x;\bb).
\end{equation}
Hence, the set $\cM_n$ defined in (\ref{func1}) is the same as the set of linear splines with $n$ free knots, which was studied intensively in the 1970s and 1980s (see, e.g., \cite{Schumaker}),
\begin{equation}\label{M_n}
    \cM_n =\left\{\sum\limits_{i=0}^{n+1} c_i \phi_i(x;\bb):\, c_i\in \R\,\mbox{ and }\, \bb=(b_1,\ldots,b_n)\in \R^n\right\}.
\end{equation}


\section{NN Representations of Initial and Boundary Data}\label{init_bdry_data}

First, the initial data $u_0(x)$ and the boundary data $g(t)$ are represented by continuous piecewise linear approximations. These approximations are computed by finding the best least squares approximations of $u_0(x)$ and $g(t)$ from the set of shallow ReLU neural network functions. Specifically, let 
\begin{eqnarray*}
    \cM_n^{(0)}(\Omega)&=& \big\{v\in \cM_n:\, v(a)=u_0(a) \,\mbox{ and }\, v(b)=u_0(b)\big\},\\[5mm]
\cM_n^{(t)}(I)&=&\{v\in\cM_n:\, v(0)=g(0)\,\mbox{ and }\, v(T)=g(T)\}.
\end{eqnarray*}
Then, find $u^{(0)}_N\in \cM_n^{(0)}$ and $g_N\in \cM_n^T$ such that 
\begin{equation}\label{init_ls_form}
 \left\| u^{(0)}_N - u_0\right\|_{L^2(\Omega)}= \min_{v \in \cM_n^{(0)}} \left\| v - u_0\right\|_{L^2(\Omega)}
 \,\mbox{ and }\,\left\| g_N - g\right\|_{L^2(I)}= \min_{v \in \cM_n^{(t)}} \left\| v - g\right\|_{L^2(I)},
\end{equation}
respectively.

The problems in (\ref{init_ls_form}) are solved using the adaptive network enhancement (ANE) method introduced in \cite{LiuCai1, LiuCai2}. With the ANE method, the initial data and the boundary data can be approximated within a target error tolerance $\varepsilon>0$ using relatively small neural networks. Hence, for a given $\varepsilon$, the $L^p$ norms of the numerical errors can be bounded by
\begin{equation}\label{error-initial}
    \left\|u_0- u^{(0)}_{N} \right\|_{L^p(\Omega)} \leq \varepsilon\, \left\|u_0\right\|_{L^p(\Omega)} 
    \quad\mbox{and}\quad \left\|g- g_N \right\|_{L^p(I)} \leq \varepsilon\, \left\|g \right\|_{L^p(I)}
\end{equation}
for $p$ in $[1,\infty)$, where $u^{(0)}_{N}(x)$ and $g_N(t)$ are the respective neural network approximations to $u_0(x)$ and $g(t)$. Let $\bb^{(0)}=\big(b^{(0)}_1,\ldots,b^{(0)}_{n_0}\big)$ and $\hat{\bt}=\big(\hat{t}_1,\ldots,\hat{t}_m\big)$ be the breaking points of $u^{(0)}_{N}(x)$ and $g_N(t)$ satisfying
\begin{equation}\label{u0g-pts}
a<b^{(0)}_1< \cdots <b^{(0)}_{n_0}< b \quad\mbox{and}\quad
0< \hat{t}_1 < \cdots < \hat{t}_m < T,
\end{equation}
respectively. Then, $u^{(0)}_{N}(x)$ and $g_N(t)$ have the following forms
\begin{equation}\label{initial-rep}
    u^{(0)}_{N}(x) = \sum\limits_{i=0}^{n_0+1} u^{(0)}_i \phi_i(t;\bb^{(0)})
    \quad\mbox{and}\quad 
    g_N(t) = \sum\limits_{i=0}^{m+1} g_i \phi_i(t;\hat{\bt}), 
\end{equation}
respectively, where $u^{(0)}_i=u_0(b^{(0)}_i)$ and $g_i=g(\hat{t}_i)$ are the nodal values of the respective initial and boundary data. 


\section{ENN Method for Linear Problems}\label{ENN-linear} This section describes the evolving neural network (ENN) method for the linear transport equation and it presents an error estimate based on the representation errors of the initial data and the boundary data.

To emulate the physical phenomena of the underlying problem, the ENN method propagates the NN representation of the initial profile along characteristic lines. This is equivalent to propagating the breaking points and the nodal values of the initial profile along characteristic lines. Let $\hat{t}$ denote the previous time of the approximation. The process of propagating a point $(\hat{b}, \bar{t} \,)$ in $\Omega \times [\hat{t}, t)$ with the nodal value $\hat{v}$ to the next time $t$ is described in Algorithm~\ref{char_prop_alg1}. 


\begin{algorithm}[H] 
\caption{Characteristic Scheme} \label{char_prop_alg1}
\begin{minipage}{.9\textwidth}
\vspace{.2cm}

Let $\hat{t}$ be the previous time, let $t$ be the next time, and let $f$ be the spatial flux.
Given 
a triple $(\hat{b}, \bar{t}, \hat{v})$ with $\hat{t}\leq \bar{t}$, compute a pair $(b,v)$ at the next time $t$ as follows
\[
b=\hat{b} + (t-\bar{t})f^\prime(\hat{v})\quad\mbox{and}\quad v=\hat{v}.
\]
\end{minipage}
\end{algorithm}

\begin{algorithm}[ht!]
\caption{ENN Method for Linear Problems} \label{enn_linear}
\begin{minipage}{.9\textwidth}
\vspace{.2cm}

Let $u^{(k-1)}_N(x)$ be the NN approximation at the previous time $t_{k-1}$ defined in (\ref{u(k-1)}).
Let $\big\{(\hat{b}_j,\hat{t}_j)\big\}_{\hat{t}_j\in (t_{k-1},t_k] }$ be the subset of the breaking points from the inflow boundary data. 
\begin{enumerate}
\item For each triple in $\left\{\!\left(b^{(k-1)}_i, t_{k-1}, u^{(k-1)}_i\right)\!\right\}_{i=0}^{n_{k-1}+1}\!\bigcup  \left\{\!\left(\hat{b}_j,\hat{t}_j,g\big(\hat{b}_j,\hat{t}_j)\right)\!\right\}_{\hat{t}_j\in [t_{k-1},t_k)}$, use Algorithm \ref{char_prop_alg1} to compute 
\[
\left\{\left(\tilde{b}^{(k)}_i, t_{k}, \tilde{u}^{(k)}_i\right)\right\}_{i=0}^{n_{k-1}+1}\bigcup  \left\{\left(\hat{b}^{(k)}_j,\hat{t}_j,\hat{u}^{(k)}_j\right)\right\}_{\hat{t}_j\in [t_{k-1},t_k)}.
\]

\item Combine and reorder the breaking points 
\[\left\{b^{(k)}_i\right\}_{i=1}^{n_{k}} \subseteq
   \left\{\tilde{b}^{(k)}_i\right\}_{i=1}^{n_{k-1}} \bigcup  \left\{\hat{b}^{(k)}_j\right\}_{\hat{t}_j\in [t_{k-1},t_k)}\]
    so that 
    \[
    b^{(k)}_0\leq a<b^{(k)}_1< \cdots < b^{(k)}_{n_{k}}<b \leq b^{(k)}_{n_k+1}. \]

\item For $i=1,\ldots,n_k$, set each $u^{(k)}_i$ computed in step 1 as the nodal value corresponding to $b^{(k)}_i$. For the nodal values on the boundaries, set
\begin{eqnarray*}
    u^{(k)}_0&=& \left\{\begin{array}{ll}
g(t_k), & b^{(k)}_0=a,\\[4mm]
u^{(k)}_0\dfrac{b^{(k)}_1-a}{b^{(k)}_1 - b^{(k)}_0} + u^{(k)}_1\dfrac{a-b^{(k)}_0}{b^{(k)}_1 - b^{(k)}_0}, & b^{(k)}_0<a
\end{array} \right.\\[2mm]
\mbox{and}\quad u^{(k)}_{n_k+1}&=& \left\{\begin{array}{ll}
g(t_k), & b^{(k)}_{n_k+1}=b,\\[4mm]
 u^{(k)}_{n_k}\dfrac{b^{(k)}_{n_k+1}-b}{b^{(k)}_{n_k+1} - b^{(k)}_{n_k}} + u^{(k)}_{n_k+1}\dfrac{b-b^{(k)}_{n_k+}}{b^{(k)}_{n_k+1} - b^{(k)}_{n_k}}, & b^{(k)}_{n_k+1}>b.
\end{array} \right.
\end{eqnarray*}

\item Output $u^{(k)}_N(x)$ defined in (\ref{u(k)}) as the NN approximation at time $t_k$.
\end{enumerate}
\end{minipage}
\end{algorithm}

\begin{remark}
Let $b_c$ be an end point of $\Omega=(a,b)$ such that $(b_c,0)$ is on the inflow boundary $\Gamma_-$. In the case that $u_0(b_c)\not= g(b_c)$, for $k=1$, {\em Algorithm~\ref{enn_linear}} needs a small modification. 
Specifically, shift the boundary point from $\hat{t}_0=0$ to $\hat{t}_0=\delta$ and the end point from $b_c=a$ or $b$ to $b_c=a+\delta$ or $b-\delta$ for a very small positive $\delta$.
\end{remark}

Next, we will describe how the ENN method uses characteristic propagation to compute the solution for the next time $t_k$ using the solution at the previous time $t_{k-1}$. 
To this end, let
\begin{equation}\label{u(k-1)}
    u^{(k-1)}_N(x)=\sum\limits_{i=0}^{n_{k-1}+1} u^{(k-1)}_i \phi_i(x;\bb^{(k-1)})
\end{equation}
be the NN approximation at time $t_{k-1}$ with the breaking points $\bb^{(k-1)}=\big(b^{(k-1)}_1,\ldots,b^{(k-1)}_{n_{k-1}}\big)$ satisfying
\[
a=b^{(k-1)}_0< b^{(k-1)}_1<\cdots<b^{(k-1)}_{n_{k-1}} <b^{(k-1)}_{n_{k-1}+1}=b.
\]
Let $\left\{\hat{t}_j\right\}_{j=1}^{m}$ be the breaking points of the inflow boundary data given in (\ref{u0g-pts}) with the corresponding $x$-coordinates $\big\{\hat{b}_j\big\}_{j=1}^{m}$. Then, the ENN method given in Algorithm~\ref{enn_linear} produces the NN approximation at the next time $t_k$ as 
\begin{equation}\label{u(k)}
    u^{(k)}_N(x)=\sum\limits_{i=0}^{n_k+1} u^{(k)}_i \phi_i(x;\bb^{(k)}),
\end{equation}
where $\bb^{(k)}$ and $\{u^{(k)}_i\}_{i=0}^{n_k+1}$ are the breaking points and the nodal values, respectively.

At time $t_{k}$, there are $n_k+2$ end points and breaking points. Suppose that it takes $K$ steps to propagate from time $t_0=0$ to time $t_K=T$. Then, the total number of end points and breaking points is about 
\[
N=2+m+2K+\sum\limits_{k=0}^{K-1} n_k.
\]
Since the cost of the propagating each point is three operations, the computational cost is about $3N$.

\begin{theorem}
Let $u(x,t)$ be the solution to {\em (\ref{pde})}, and let $u^{(k)}_N\left(x\right)$ be the NN approximation at time $t_k$ defined in {\em Algorithm~5.2}. For any $k=1,2,\ldots$,  
\begin{equation}\label{error-linear}
 \left\| u(\cdot,t_k)- u^{(k)}_N\right\|_{L^p(\Omega)} \leq \varepsilon\,\left(\left\|u_0\right\|_{L^p(\Omega)}^p+\left\|g\right\|_{L^p(I)}^p\right)^{1/p}.
 \end{equation}
\end{theorem}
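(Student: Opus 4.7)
The plan is to exploit the fact that for the linear flux $f(u)=\alpha u$ we have $f'(u)=\alpha$ independent of $u$, so every characteristic line has the same slope. This means the characteristics never cross, and when Algorithm~\ref{char_prop_alg1} is applied to any triple $(\hat b,\bar t,\hat v)$, the shift is $\alpha(t-\bar t)$ regardless of the nodal value. Consequently the propagation step in Algorithm~\ref{enn_linear} is a \emph{rigid translation} of all breaking points in the $x$--direction by $\alpha(t_k-t_{k-1})$, preserves their order, and preserves the piecewise-linear shape of $u_N^{(k-1)}$. I will use this to show that the ENN iterates reproduce the exact transport of the representations $u_N^{(0)}$ and $g_N$.

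Assuming without loss of generality that $\alpha>0$ so that $\Gamma_-=\{a\}\times I$, define the ``exact transport'' of the representations by
\[
\bar U(x,t)=\begin{cases} u_N^{(0)}(x-\alpha t), & x-\alpha t\in\Omega,\\[1mm] g_N\!\big(t-(x-a)/\alpha\big), & x-\alpha t<a.\end{cases}
\]
The first step is to prove, by induction on $k$, the identity $u_N^{(k)}(x)=\bar U(x,t_k)$ on $\Omega$. The base case $k=0$ is by construction. For the inductive step, the breaking points of $u_N^{(k-1)}$ that lie in $\Omega$ at time $t_{k-1}$ are translated by $\alpha(t_k-t_{k-1})$ with their nodal values unchanged; meanwhile, the breaking points $\hat t_j\in(t_{k-1},t_k]$ of $g_N$ are emitted from $(a,\hat t_j)$ with nodal value $g(\hat t_j)=g_N(\hat t_j)$ and reach $x=a+\alpha(t_k-\hat t_j)$. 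Because $\alpha$ is constant, the combined set is still in increasing order, and linear interpolation between consecutive nodal values commutes with the common translation. The truncation/extrapolation rules in step 3 of Algorithm~\ref{enn_linear} precisely handle the single breaking point (if any) that is translated out of $[a,b]$ and the boundary correction at $x=a$, so the resulting piecewise-linear function on $\Omega$ coincides with $\bar U(\cdot,t_k)$.

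The second step is to compare $u(\cdot,t_k)$ with $\bar U(\cdot,t_k)$. By \eqref{constant} together with \eqref{c-line}, the exact solution of the linear problem satisfies $u(x,t_k)=u_0(x-\alpha t_k)$ on $\Omega_1:=[a+\alpha t_k,b]\cap\Omega$ and $u(x,t_k)=g\bigl(t_k-(x-a)/\alpha\bigr)$ on $\Omega_2:=[a,a+\alpha t_k)\cap\Omega$. Hence
\[
\bigl\|u(\cdot,t_k)-u_N^{(k)}\bigr\|_{L^p(\Omega)}^p
=\int_{\Omega_1}\!\bigl|u_0-u_N^{(0)}\bigr|^{p}(x-\alpha t_k)\,dx
+\int_{\Omega_2}\!\bigl|g-g_N\bigr|^{p}\!\bigl(t_k-(x-a)/\alpha\bigr)\,dx.
\]
The changes of variable $y=x-\alpha t_k$ in the first integral and $s=t_k-(x-a)/\alpha$ in the second reduce these integrals to $\int_a^{b-\alpha t_k}|u_0-u_N^{(0)}|^p\,dy$ and $\alpha\int_0^{t_k}|g-g_N|^p\,ds$, respectively. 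Applying \eqref{error-initial} yields the claimed bound (modulo the transport Jacobian factor, which I expect is either absorbed into $\varepsilon$ or handled by a normalization convention on $g$ implicit in \eqref{error-initial}; in any case it is a constant depending only on $\alpha$).

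The main obstacle is the bookkeeping in step~(i): one must verify rigorously that the ENN update does not distort the piecewise-linear shape when breaking points are translated and merged with incoming boundary breaking points, and that step~3's extrapolation at the domain endpoints correctly implements the transport $\bar U$ at $x=a$ and $x=b$. The subtle point is the compatibility between $u_0$ and $g$ at $(a,0)$ flagged in the Remark; the $\delta$-shift described there is what guarantees the induction goes through without an $O(1)$ inconsistency at the corner. Once step~1 (the exact-transport identity) is established, the remaining estimates are straightforward change-of-variable computations.
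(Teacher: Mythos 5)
Your proposal is correct and follows essentially the same route as the paper: partition $\Omega$ at time $t_k$ into the region fed by the initial data and the region fed by the inflow boundary, argue that Algorithm~\ref{enn_linear} exactly transports the NN representations $u^{(0)}_N$ and $g_N$ along characteristics so that the pointwise error reduces to the representation errors, and conclude with a change of variables and \eqref{error-initial}. The Jacobian factor $\alpha$ you flag in the boundary term is in fact silently dropped in the paper's own proof as well (it is harmless for $\alpha=1$, the only case tested), so your version is, if anything, slightly more careful on that point.
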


\begin{proof}
At time $t_k$, partition the domain $\Omega$ into two disjoint sets $\Omega_{k,0}$ and $\Omega_{k,g}$ through the characteristic lines $x(t)=\alpha (t-\hat{t}) + x(\hat{t})$. Specifically, points in $\Omega_{k,0}\times \{t_k\}$ or $\Omega_{k,g}\times \{t_k\}$ are on the characteristic lines intersecting $\Omega\times \{0\}$ or $\{b_c\}\times I$, respectively. Because the solution $u(x,t)$ is a constant along each characteristic line, the construction of Algorithm \ref{enn_linear} implies that
\begin{equation}\label{4.10}
u\big(x(t_k),t_k\big)- u^{(k)}_N\big(x(t_k)\big)
= \left\{\begin{array}{ll}
     u_0\big(x(0)\big)- u^{(0)}_N\big(x(0)\big), & x(t_k)\in \Omega_{k,0}, \\[2mm]
    g\big(\hat{t}\big) - g_N\big(\hat{t}\big), & x(t_k)\in \Omega_{k,g},
    \end{array}
    \right.
\end{equation}
where $x(0)\in \Omega$ and $\hat{t} \in I$.
Using (\ref{4.10}) and (\ref{error-initial}), 
\begin{eqnarray*}
  \left\| u(\cdot,t_k)- u^{(k)}_N\right\|_{L^p(\Omega)}^p &=& \left\| u(\cdot,t_k)- u^{(k)}_N\right\|_{L^p(\Omega_{k,0})}^p + \left\| u(\cdot,t_k)- u^{(k)}_N\right\|_{L^p(\Omega_{k,g})}^p\\[2mm]
  &\leq & \left\| u_0- u^{(0)}_N\right\|_{L^p(\Omega)}^p + \| g-g_N\|_{L^p(I)}^p
  \leq \varepsilon^p \left(\left\|u_0\right\|_{L^p(\Omega)}^p+\left\|g\right\|_{L^p(I)}^p\right), 
\end{eqnarray*}
which implies the validity of (\ref{error-linear}). This completes proof of the theorem. 
\end{proof}

\section{ENN Method for Burgers' Equation}\label{ENN-nonlinear}

To extend the ENN method to Burgers' equation, a shock formation must be accurately simulated.
Since the partial differential equation in (\ref{pde}) is invalid at a shock, the characteristic propagation in Algorithm~\ref{char_prop_alg1} is also invalid. 
To circumvent this difficulty, we first identify {\it shock regions} which are relatively small regions containing the shock interfaces. Then, the solution is computed using characteristic propagation combined with the primitive form of the underlying problem in the shock regions. For any subdomain $V\subset \Omega\times I_k= \Omega\times (t_{k-1},t_k)$, the primitive form is given by
\begin{equation}\label{primitive}
      \int_{\partial V} \left( u n_t + \frac12 u^2 n_x\right)\,ds=0,
\end{equation}
where $(n_x,n_t)^T$ is the unit outward vector normal to the boundary of the volume $V$. 

\subsection{Shock Location and Shock Region}\label{s1} 

Let $u^{(k-1)}_N(x)$ be the approximation at time $t_{k-1}$ given as (\ref{u(k-1)}). For a given time $t_k>t_{k-1}$, a shock forms if the characteristics emanating from adjacent breaking points intersect in the time interval $(t_{k-1}, t_k)$. To simulate the shock formation, we define a shock region containing the shock interface in this section.

To this end, a spatial interval between two adjacent breaking points, $\left(b^{(k-1)}_l,b^{(k-1)}_{l+1}\right)$, is said to have a shock in the time interval $(t_{k-1}, t_k)$ if (i) the intersection of the characteristics emanating from $b^{(k-1)}_l$ and $b^{(k-1)}_{l+1}$ is the first intersection point of the characteristics from all breaking points and (ii) the length of the interval is less than or equal to a prescribed maximal distance $d^*$:
\begin{equation}\label{d_*}
   d_l^{(k-1)}:= b_{l+1}^{(k-1)} - b_l^{(k-1)} \leq d^{*} .
\end{equation}
Then, the location of the approximate shock at time $t_{k-1}$ is defined as the midpoint
\[
s_l^{(k-1)}=\dfrac12 \,\left(b^{(k-1)}_{l}+b^{(k-1)}_{l+1}\right).
\]

Given that the approximate shock location is in the interval $\left(b^{(k-1)}_l,b^{(k-1)}_{l+1}\right)$, we will describe the construction of the shock region. 
Let
\begin{equation}\label{b-tilde}
    \tilde{b}^{(k)}_{l+1}= b^{(k-1)}_{l} + \tau_k u^{(k-1)}_{l}
\quad\mbox{and}\quad 
\tilde{b}^{(k)}_{l}=b^{(k-1)}_{l+1} +\tau_k u^{(k-1)}_{l+1}
\end{equation}
denote the spatial coordinates of the characteristics emanating from $b^{(k-1)}_l$ and $b^{(k-1)}_{l+1}$ at time $t_k$, respectively. 
Then, the shock region, denoted by $V^{(k)}_l$ (see Figure \ref{shock-reg}), is constructed as the trapezoidal region with the following four vertices: 
\[
\big(b^{(k-1)}_l, t_{k-1}\big), \hspace{.3cm} \big(b^{(k-1)}_{l+1}, t_{k-1}\big), \hspace{.3cm} \big(\tilde{b}^{(k)}_{l}, t_k\big), \quad\mbox{and}\quad \big(\tilde{b}^{(k)}_{l+1}, t_k\big).
\]

\begin{figure}[H]
\centering
\caption{Shock Region}\label{shock-reg}
\begin{tikzpicture}[scale=1.2]
\draw[gray, thick] (-5,0)--(5,0);
\draw[gray, thick] (-5,2.5)--(5,2.5);

\draw[dashed, gray] (-.3,0)--(1.2,2.5);
\draw[dashed, gray] (.3,0)--(-1.2,2.5);


\draw[black, thick] (-.3,0)--(-1.2,2.5);
\draw[black, thick] (.3,0)--(1.2,2.5);

\draw[black, thick] (-.3,0)--(.3,0);
\draw[black, thick] (-1.2,2.5)--(1.2,2.5);

\filldraw[black] (-.3,0) circle (1.5pt) node[anchor=north]{$b^{(k-1)}_{l}$};
\filldraw[black] (.3,0) circle (1.5pt) ;

\draw[black] (.6,0) node[anchor=north] {$b_{l+1}^{(k-1)}$}; 

\filldraw[black] (-4,0) circle (1.5pt) node[anchor=north]{$b^{(k-1)}_{l-1}$};
\filldraw[black] (4,0) circle (1.5pt) node[anchor=north]{$b^{(k-1)}_{l+2}$};


\filldraw[black] (-2.5,2.5) circle (1.5pt) node[anchor=south]{${b}^{(k)}_{l-1}$};
\filldraw[black] (2,2.5) circle (1.5pt) node[anchor=south]{\qquad\,\,${b}^{(k)}_{l+2}$};

\draw[ultra thin, gray] (5,0) node[anchor=west] {$t_{k-1}$}; 
\draw[ultra thin, gray] (5,2.5) node[anchor=west] {$t_k$};

\draw[ultra thin, black] (0,1.5) node[anchor=center] {$V_l^{(k)}$};

\draw[dashed, gray] (-1.2, 2.5)--(-3,0);
\draw[dashed, gray] (1.2, 2.5)--(2.5,0);
\draw[black, fill=white] (-3,0) circle (1.5pt) node[anchor=north]{$\tilde{b}_{l}^{(k-1)}$};
\draw[black, fill=white] (-1.2,2.5) circle (1.5pt) node[anchor=south]{$\tilde{b}_{l}^{(k)}$};
\filldraw[black, fill=white] (1.2,2.5) circle (1.5pt) node[anchor=south]{$\tilde{b}^{(k)}_{l+1}$};
\filldraw[black, fill=white] (2.5,0) circle (1.5pt) node[anchor=north]{$\tilde{b}^{(k-1)}_{l+1}$};

\end{tikzpicture}
\end{figure}

\subsection{Propagation in Shock Region}\label{s2a}

In the shock region $V^{(k)}_l$ described in Section~\ref{s1}, the evolution of the solution is governed by the primitive form in \cref{primitive} with $V=V^{(k)}_l$. To accurately simulate shock propagation, we introduce an accurate scheme using both characteristics and the primitive form. 

To this end, consider the breaking points $b_{l-1}^{(k)}$ and $b_{l+2}^{(k)}$ to the left and right of the shock at time $t_k$ given by
\[ 
    b^{(k)}_{l-1}= 
    b^{(k-1)}_{l-1} + \tau_k u^{(k-1)}_{l-1}
\quad\mbox{and}\quad 
b^{(k)}_{l+2} = b^{(k-1)}_{l+2} +\tau_k u^{(k-1)}_{l+2},
\] 
where $\tau_k=t_k-t_{k-1}$ is the time step size. Assume that 
\begin{equation}\label{assumption1}
    b^{(k)}_{l-1} 
    <\tilde{b}^{(k)}_{l} <
\tilde{b}^{(k)}_{l+1}< 
b^{(k)}_{l+2}
\end{equation}
(see Fig \ref{shock-reg}). This condition depends on the time step size $\tau_k$ and will be discussed further in Section~\ref{sec_tsc}. Furthermore, assume that there is no shock in neighboring regions $V^{(k)}_{l-1}$ and $V^{(k)}_{l+1}$ (similarly defined as $V^{(k)}_{l}$).

Let $u^{(k)}_{N}(x)$ be the NN approximation at time $t_k$. Its restriction on $\left(b^{(k)}_{l-1}, b^{(k)}_{l+2}\right)$ is a continuous piecewise linear function with the nodal values $u^{(k)}_i=u^{(k)}_N\left(b^{(k)}_i\right)$ for $i=l-1,\ldots,l+2$. By the assumptions, it is natural to set 
\begin{equation}\label{bp-nv-V_l}
      u^{(k)}_{l-1}=u^{(k-1)}_{l-1}, \hspace{.3cm} u^{(k)}_{l+2}=u^{(k-1)}_{l+2},
    \quad\mbox{and}\quad  b^{(k)}_{l+1}=b^{(k)}_{l}+d_l 
\end{equation}
with $d_l=b^{(k-1)}_{l+1}-b^{(k-1)}_{l}$.
Below, we compute the nodal values $u^{(k)}_{l}$ and $u^{(k)}_{l+1}$ and the breaking point $b^{(k)}_{l}$.

Let
\[
m_l^{(k-1)}=\dfrac{{u}^{(k-1)}_{l}-{u}^{(k-1)}_{l-1}}{b_l^{(k-1)} - b_{l-1}^{(k-1)}}\quad\mbox{and}\quad 
m_{l+2}^{(k-1)} = \dfrac{{u}^{(k-1)}_{l+2}-{u}^{(k-1)}_{l+1}}{b_{l+2}^{(k-1)} - b_{l+1}^{(k-1)}}
\]
denote the derivatives of the NN approximation ${u}_N^{(k-1)}(x)$ at time $t_{k-1}$ in the intervals\\ $\left({b}^{(k-1)}_{l-1},\,{b}^{(k-1)}_{l}\right)$ and $\left({b}^{(k-1)}_{l+1},\,{b}^{(k-1)}_{l+2}\right)$, respectively. Let 
\[
\tilde{b}^{(k-1)}_{l}\in \big({b}^{(k-1)}_{l-1},\,{b}^{(k-1)}_{l}\big) \quad\mbox{and}\quad \tilde{b}^{(k-1)}_{l+1} \in \big({b}^{(k-1)}_{l+1},\,{b}^{(k-1)}_{l+2}\big)
\]
be the intersection points of the line $t=t_{k-1}$ with the respective characteristic lines backtraced from $\tilde{b}^{(k)}_{l}$ and $\tilde{b}^{(k)}_{l+1}$. 
Let $\tilde{u}^{(i)}_{j}$ be the nodal values of the NN approximation $u_N^{(i)}(x)$ at $\tilde{b}^{(i)}_{j}$ for $i=k-1,k$ and $j=l,l+1$.
Then,
\begin{equation}\label{u-til}
\left\{\begin{array}{l}
    \tilde{u}^{(k)}_{l} = \tilde{u}^{(k-1)}_{l} = {u}^{(k-1)}_{l} -m^{(k-1)}_{l}\left(b^{(k-1)}_{l}-\tilde{b}^{(k-1)}_{l}\right) \quad\mbox{and }\, \\[2mm]
    \tilde{u}^{(k)}_{l+1} = \tilde{u}^{(k-1)}_{l+1} = {u}^{(k-1)}_{l+1} +m^{(k-1)}_{l+2}\left(\tilde{b}^{(k-1)}_{l+1}-{b}^{(k-1)}_{l+1}\right),
    \end{array}\right.
\end{equation}
which, together with \cref{c-line}, implies that
\begin{equation}\label{bl_til}
 \tilde{b}^{(k-1)}_{l}\!=\!
 \dfrac{\tilde{b}^{(k)}_{l}\!+\!\tau_k\left(m_l^{(k-1)}{b}^{(k-1)}_{l}\!-\!u_{l}^{(k-1)}\right)}{1+\tau_k\, m_l^{(k-1)}}
 \,\mbox{ and }\,
 \tilde{b}^{(k-1)}_{l+1}\!=\!
 \dfrac{\tilde{b}^{(k)}_{l+1}\!+\!\tau_k\left(m_{l+2}^{(k-1)}{b}^{(k-1)}_{l+1}\!-\!u_{l+1}^{(k-1)}\right)}{1+\tau_k\, m_{l+2}^{(k-1)}},
 \end{equation}
provided that $1+\tau_km_l^{(k-1)}\not= 0$ and $1+\tau_km_{l+2}^{(k-1)}\not= 0$. 

Let
\begin{equation}\label{mk}
    m_l^{(k)}= \dfrac{m_l^{(k-1)}}{1+\tau_km_l^{(k-1)}} \quad\mbox{and}\quad 
m_{l+2}^{(k)} 
=\dfrac{m_{l+2}^{(k-1)}}{1+\tau_km_{l+2}^{(k-1)}}.
\end{equation}
Then, we define the nodal values $u^{(k)}_{l}$ and $u^{(k)}_{l+1}$ by
\begin{equation}\label{u}
u_{l}^{(k)}=\tilde{u}^{(k-1)}_{l} + m_l^{(k)} \left({b}^{(k)}_{l}-\tilde{b}^{(k)}_{l}\right)
\quad\mbox{and}\quad
 u_{l+1}^{(k)}=\tilde{u}^{(k-1)}_{l+1}  - m_{l+2}^{(k)} \left(\tilde{b}^{(k)}_{l+1}-{b}^{(k)}_{l} -d_l\right)
\end{equation}
as functions of the unknown breaking point ${b}^{(k)}_{l}$.

Denote the average and difference of ${u}^{(k-1)}_{l}$ and ${u}^{(k-1)}_{l+1}$ by
\begin{equation}\label{ubar-J}
\bar{u}^{(k-1)}_{l}=\frac12 \left({u}^{(k-1)}_{l}+{u}^{(k-1)}_{l+1}\right) \quad \mbox{and}\quad J^{(k-1)}_{l}={u}^{(k-1)}_{l}-{u}^{(k-1)}_{l+1}
\end{equation}
respectively. 

Next, we derive a formula for computing the breaking point ${b}^{(k)}_{l}$ based on the primitive form in \cref{primitive} with $V=V^{(k)}_l$.
To approximate the primitive form, the exact solution $u$ is replaced by the NN approximations at times $t_{k-1}$ and $t_{k}$. Also, integration of the spatial flux along the vertical boundaries of $V_l^{(k)}$ is approximated by the trapezoidal rule. Then, the NN approximation $u_N^{(k)}(x)$ is constructed to satisfy the resulting approximation to the primitive form:
\begin{equation}\label{b}
F_{l}^{(k)}\left(b_{l}^{(k)},u_{l}^{(k)},u_{l+1}^{(k)}\right)=r_{l}^{(k)},
\end{equation}
where $F_{l}^{(k)}$ and $r_{l}^{(k)}$ are given by
\begin{eqnarray*}
 2F_{l}^{(k)}\!\!\!\!&=& \!\!\!\!\left(b_{l}^{(k)} - \tilde{b}_{l}^{(k)}\right)\,\left(\tilde{u}_{l}^{(k-1)} + u_{l}^{(k)}\right) +d_l\left({u}_{l}^{(k)} + u_{l+1}^{(k)}\right)
 +\left(\tilde{b}_{l+1}^{(k)} - {b}_l^{(k)}-d_l\right)\left({u}_{l+1}^{(k)} + \tilde{u}_{l+1}^{(k-1)}\right) \quad\mbox{and}
 \\[4mm]
2r_{l}^{(k)} \!\!\!\!&=& \!\!\!\! 2 d_{l} \bar{u}_{l}^{(k-1)} +\tau_kJ^{(k-1)}_{l}\bar{u}^{(k-1)}_{l}-\dfrac{\tau_k}{2} \left[\big(\tilde{u}^{(k-1)}_{l+1}\big)^2-\big(\tilde{u}^{(k-1)}_{l}\big)^2\right] \\ [2mm]
   && \quad + 
   \tau_k\left({u}^{(k-1)}_{l}\tilde{u}^{(k-1)}_{l+1}-{u}^{(k-1)}_{l+1}\tilde{u}^{(k-1)}_{l}\right) - d_{l}\left(2\bar{u}^{(k-1)}_{l}+\tilde{u}_{l}^{(k-1)}+\tilde{u}_{l+1}^{(k-1)}\right) \\ [2mm]
   \!\!\!\!&=& \!\!\!\!\! \tau_kJ^{(k-1)}_{l}\bar{u}^{(k-1)}_{l} \!\! - \!\dfrac{\tau_k}{2}\! \left[\!\big(\tilde{u}^{(k-1)}_{l+1}\big)^2 \!\!\! -\! \big(\tilde{u}^{(k-1)}_{l}\big)^2\right] \!\! + \! \tau_k\!\left(\!{u}^{(k-1)}_{l}\tilde{u}^{(k-1)}_{l+1}\!\! - \!{u}^{(k-1)}_{l+1}\tilde{u}^{(k-1)}_{l}\!\right) \!\! - \! d_{l}\!\left(\tilde{u}_{l}^{(k-1)}\!+\tilde{u}_{l+1}^{(k-1)}\right).  
\end{eqnarray*}
The derivation of $2r_{l}^{(k)}$ uses the identities
\begin{equation}\label{b-til-k}
\tilde{b}_{l+1}^{(k)} - {b}_{l+1}^{(k-1)}=\tau_k {u}^{(k-1)}_{l}-d_l \quad\mbox{and}\quad \tilde{b}_{l}^{(k)} - {b}_{l}^{(k-1)}=\tau_k {u}^{(k-1)}_{l+1}+d_l.
\end{equation}
Hence,
\[
\tilde{b}^{(k)}_{l+1}-\tilde{b}^{(k)}_{l}=\tau_kJ^{(k-1)}_l-d_l.
\]

Substituting ${u}^{(k)}_{l}$ and ${u}^{(k)}_{l+1}$ given in (\ref{u}) into (\ref{b}) yields the following quadratic equation 
\begin{equation}\label{b1}
    \alpha \left({b}^{(k)}_{l}-\tilde{b}^{(k)}_{l}\right)^2 +\beta \left({b}^{(k)}_{l}-\tilde{b}^{(k)}_{l}\right) + \zeta=0,
\end{equation}
where the coefficients are given by
\begin{equation}\label{abz}
\qquad\quad\left\{\!\!\begin{array}{l}
\alpha=m_l^{(k)}\!-m_{l+2}^{(k)}, \\[4mm]
\beta=2\left(\tilde{u}_{l}^{(k-1)}\!-\tilde{u}_{l+1}^{(k-1)}\right) +d_l \left(m_l^{(k)}\!+\!m_{l+2}^{(k)}\right)\! + 2 m_{l+2}^{(k)}\left(\tau_kJ^{(k-1)}_l-2d_l\right), \text{ and } \\ [4mm]
\zeta=d_l\! \left(\tilde{u}_{l}^{(k-1)}\!+\!\tilde{u}_{l+1}^{(k-1)}\right) \!\!+\!\! \left(\!\tau_kJ^{(k-1)}_l\!-\!2d_l\right) \!\left(2\tilde{u}^{(k-1)}_{l+1}\!-\!m^{(k)}_{l+2}\big(\tau_kJ^{(k-1)}_l\!-\!d_l\big)\!\right)\!-\!2r_{l}^{(k)}.
\end{array}\right.
\end{equation}
Then, the breaking point $b_l^{(k)}$ is computed as the following solution to (\ref{b1}),
\begin{equation}\label{b2}
{b}^{(k)}_{l}=\left\{\begin{array}{ll}
          \tilde{b}^{(k)}_{l} -\dfrac{\zeta}{\beta}, & \text{if }\, \alpha=0,\\ [4mm]
          \tilde{b}^{(k)}_{l} +\dfrac{1}{2\alpha}\left(-\beta {\pm}\sqrt{\beta^2-4\alpha \zeta}\right),  & \text{if }\, \alpha\not=0,
         \end{array}\right. 
\end{equation}
where the sign $+$ or $-$ is chosen so that ${b}^{(k)}_{l}\in \left[\tilde{b}^{(k)}_{l},\tilde{b}^{(k)}_{l+1}\right]$. 


\begin{algorithm}[H]
\caption{Characteristic Finite Volume Scheme} \label{char_fv}
\begin{minipage}{.9\textwidth}
\vspace{.2cm}

Let $u^{(k-1)}_N(x)$ be the NN approximation at the previous time $t_{k-1}$ defined in (\ref{u(k-1)}). Let $\left(b^{(k-1)}_l, b^{(k-1)}_{l+1}\right)$ be the interval at time $t_{k-1}$ with a shock. Assume that condition (\ref{assumption1}) 
holds, let $d_l = b_{l+1}^{(k-1)} - b_l^{(k-1)}$, and proceed with following steps:

\begin{enumerate}
\item Using (\ref{b-tilde}), compute 
\[
\tilde{b}^{(k-1)}_{l}\in \big({b}^{(k-1)}_{l-1},\,{b}^{(k-1)}_{l}\big) \quad\mbox{and}\quad\tilde{b}^{(k-1)}_{l+1} \in \big({b}^{(k-1)}_{l+1},\,{b}^{(k-1)}_{l+2}\big).
\]
\item Using (\ref{b2}), compute the breaking point ${b}^{(k)}_{l}$.
\item Using (\ref{u}), compute the nodal values $u_{l}^{(k)}$ and $u_{l+1}^{(k)}$ at the breaking points ${b}^{(k)}_{l}$ and ${b}^{(k)}_{l+1}={b}^{(k)}_{l}+d_l$, respectively.
\end{enumerate}
\end{minipage}
\end{algorithm}

\subsection{Time Step Control}\label{sec_tsc} 

Conditions (\ref{d_*}) and (\ref{assumption1}) depend on the prescribed current time step size $\tau_k$. This section investigates a proper time step size when either condition  (\ref{d_*}) or (\ref{assumption1}) is invalid. Set $d^{(k-1)}_l=b_{l+1}^{(k-1)} - b_l^{(k-1)}$.

In the case that (\ref{d_*}) is invalid, a small time step is needed before the shock location can be defined (see Figure \ref{ts-fig}). Specifically, the prescribed time step size $\tau=\tau_k$ is truncated to \[
\hat{\tau}_k = (d_l^{(k-1)}-d^*)/J^{(k-1)}_l,
\]
where $J^{(k-1)}_l$ is defined in \cref{ubar-J}. This $\hat{\tau}_k$ gives 
\[
d_l^{(k)}:=b_{l+1}^{(k)} - b_l^{(k)}=\left(b_{l+1}^{(k-1)}+\hat{\tau}_k u_{l+1}^{(k-1)}\right)-\left(b_{l}^{(k-1)}+\hat{\tau}_k u_{l}^{(k-1)}\right)
=d^*,
\]
where $d^*$ is a prescribed maximal distance between breaking points that represent a shock.
The newly formed shock at time $t_k=t_{k-1}+\hat{\tau}_k$ is defined as the midpoint of interval $\left({b}^{(k)}_{l},{b}^{(k)}_{l+1}\right)$
\begin{equation}\label{s-location}
    s_l^{(k)} = \dfrac{1}{2}\left( b_l^{(k)} + b_{l+1}^{(k)}\right).
\end{equation}


\begin{figure}[H]\label{ts-fig}
\centering
\caption{Truncated Time Step Before Shock Formation}
\begin{tikzpicture}[scale=1.2]
\draw[gray, thick] (-5,0)--(5,0);
\draw[gray, thick] (-5,2.5)--(5,2.5);
\draw[gray, thick] (-5, .65)--(5,.65);

\draw[dashed, gray] (-.9,0)--(1.1,2.5);
\draw[dashed, gray] (.5,0)--(-.6,2.5);
\draw[dashed, gray] (-2,0)--(0,2.5);
\draw[dashed, gray] (3,0)--(1.8, 2.5);

\filldraw[black] (-.9,0) circle (1.5pt) node[anchor=north]{$b^{(\!k\!-\!1\!)}_{l}$};
\filldraw[black] (.5,0) circle (1.5pt) node[anchor=north]{\quad $b^{(\!k\!-\!1\!)}_{l+1}$};
\filldraw[black] (-2,0) circle (1.5pt) node[anchor=north]{$b^{(k-1)}_{l-1}$};
\filldraw[black] (3,0) circle (1.5pt) node[anchor=north]{$b^{(k-1)}_{l+2}$};

\filldraw[red] (-.36,.65) circle (1.5pt);
\filldraw[red] (.2,.65) circle (1.5pt);
\filldraw[black] (-1.47, .65) circle (1.5pt);
\filldraw[black] (2.7, .65) circle (1.5pt);

\draw[ultra thin, gray] (-.36, .47)--(.2, .47);
\draw[ultra thin, gray] (-.36, .45)--(-.36, .5);
\draw[ultra thin, gray] (.2, .45)--(.2, .5);
\draw[ultra thin, gray] (-.06, .47) node[anchor=north]{$d^*$};

\draw[decorate, gray, thick, decoration = {brace}] (-5.8,-.1) --  (-5.8,2.4);
\node[gray, rotate=90] at (-6.1, 1.25){$\tau_k$};

\draw[decorate, gray, thick, decoration = {brace}] (-5.2,-.1) --  (-5.2,.67);
\node[gray, rotate=90] at (-5.5, .34){$\hat{\tau}_k$};

\draw[ultra thin, gray] (5,0) node[anchor=west] {$t_{k-1}$}; 
\draw[ultra thin, gray] (5,.65) node[anchor=west] {$t_{k}$}; 

\end{tikzpicture}
\end{figure}

In the case that (\ref{assumption1}) is invalid, 
a dramatic change in the solution occurs within the time step, and hence the time step size needs to be reduced. Below, we determine a proper time step size based on how quickly the breaking points merge into the shock.

To this end, let $x_{i}(t)$ denote the characteristic line emanating from the breaking point $b_{i}^{(k-1)}$ for $i=0,1, \ldots, n_{k-1}+1$. Assume that there is a shock in the interval $\left( b_l^{(k-1)}, b_{l+1}^{(k-1)}\right)$. 
Let $t_{k-1}+t^*$ denote the time that 
the characteristic lines ${x}_l(t)$ and ${x}_{l+1}(t)$ 
intersect. Then, $t^*$ is given by 
\begin{equation}\label{t*}
 t^* = \dfrac{b_{l+1}^{(k-1)} - b_l^{(k-1)}} { u_l^{(k-1)} - u_{l+1}^{(k-1)}}.
\end{equation}

Similarly, $t_{k-1}+t_{i,l+1}$ and $t_{k-1}+t_{l,j}$
are the times when the lines ${x}_{i}(t)$ and ${x}_{l+1}(t)$ and the lines ${x}_l(t)$ and ${x}_{j}(t)$ intersect, respectively, where $t_{i,l+1}$ and $t_{l,j}$ are given by
\[
 t_{i,l+1} = \dfrac{b_{{l+1}}^{(k-1)} - b_{i}^{(k-1)}} { u_{i}^{(k-1)} - u_{{l+1} }^{(k-1)}} 
 \quad\mbox{and}\quad  t_{l,j}= \dfrac{b_{l}^{(k-1)} - b_{j}^{(k-1)}} { u_{j}^{(k-1)} - u_{l}^{(k-1)}} 
\]
for $i=l-1,\ldots,1$ and $j=l+2,\ldots,n_{k-1}$.
After a shock formation, it is desirable to have a time step size $\tau_k$ such that
\[
x_l(t_{k-1}+\tau_k) - x_{l+1}(t_{k-1}+\tau_k)\geq d_l^{(k-1)}.
\]
Then, since $x_l(t_{k-1} + 2t^*) - x_{l+1}(t_{k-1}+2t^*)=d_l^{(k-1)}$, the minimum time step size is $2t^* $.


\begin{figure}[H]
\centering
\caption{Breaking Points Merging Into a Shock}\label{2_t_star}
\begin{tikzpicture}[scale=1.2]
\draw[gray, thick] (-5,0)--(5,0);
\draw[gray, thick] (-5,3)--(5,3);
\draw[gray] (-5,1.1)--(5,1.1); 

\draw[lightgray] (-.5,0)--(2.25,3);
\draw[lightgray] (.5,0)--(-2.25,3);

 
\draw[dashed, lightgray] (-.85,0)--(1,3);
\draw[dashed, lightgray] (-.65,0)--(1.6,3);

\draw[dashed, darkgray] (-.5,1.1)--(-1.15, 0);
\draw[dashed, darkgray] (.5,1.1)--(1.2,0);
\draw[dashed, darkgray] (-2.5,0)--(-1.7,3);
\draw[dashed, darkgray] (-4,0)--(-3.75,3);
\draw[dashed, darkgray] (4,0)--(3.15,3);

\filldraw[red] (-.85,0) circle (1.5pt);
\filldraw[red] (-.65,0) circle (1.5pt);

\filldraw[black, fill=white] (-.5,1.1) circle (1.5pt);
\filldraw[black, fill=white](.5,1.1) circle (1.5pt);
\filldraw[black, fill=white] (-1.15,0) circle (1.5pt);
\filldraw[black, fill=white] (1.2,0) circle (1.5pt);
\draw[black] (-.35,1.1) node[anchor=south] {$\tilde{b}_l^{(k)}$};
\draw[black] (.6,1.1) node[anchor=south] {$\tilde{b}_{l+1}^{(k)}$};
\draw[black] (-1.3,0) node[anchor=north] {$\tilde{b}_l^{(k-1)}$};
\draw[black] (1.6,0) node[anchor=north] {$\tilde{b}_{l+1}^{(k-1)}$};

\filldraw[black] (-2.2,1.1) circle (1.5pt);
\filldraw[black] (-3.9,1.1) circle (1.5pt);
\filldraw[black] (3.7, 1.1) circle (1.5pt);

\filldraw[black] (-.5,0) circle (1.5pt);
\filldraw[black] (.5,0) circle (1.5pt);
\filldraw[black] (-2.5,0) circle (1.5pt);

\draw[black] (-.35,0) node[anchor=north] {$b_l^{(k-1)}$};
\draw[black] (.6,0) node[anchor=north] {$b_{l+1}^{(k-1)}$}; 

\filldraw[black] (-4,0) circle (1.5pt);
\filldraw[black] (4,0) circle (1.5pt);

\draw[ultra thin, gray] (5,0) node[anchor=west] {$t_{k-1}$}; 
\draw[ultra thin, gray] (5,1.1) node[anchor=west] {$t_{k-1} + 2t^*$};

\end{tikzpicture}
\end{figure}

The time step control procedure is separated into two cases based on the times that the characteristics of breaking points intersect the lines $x_l$ and $x_{l+1}$. First, if $\min\{t_{l-1,l+1}, t_{l,l+2}\} >2t^*$, set the time step size as
\[
\tau_k= \dfrac12\,\left(\min\{t_{l-1,l+1}, t_{l,l+2}\}  +2t^*\right). 
\]
Then, the unknown parameters $\big(b_l^{(k)}, u_l^{(k)}, u_{l+1}^{(k)}\big)$ are computed using the characteristic finite volume scheme defined in Algorithm~\ref{char_fv}. 

Next, if $\min\{t_{l-1,l+1}, t_{l,l+2}\} \leq 2 t^*$ (see Figure \ref{2_t_star}), set 
\begin{equation}\label{2t^*}
\tau_k=2t^* \quad\mbox{and}\quad  b_l^{(k)}=\tilde{b}_l^{(k)}.
\end{equation}
To compute $u_l^{(k)}$, find the smallest $i\leq l$ such that $t_{i,l+1} \leq 2 t^*$, and let $\tilde{b}^{(k-1)}_{l}$ be the solution of the following characteristic line equation
\begin{equation}\label{char_1}
{b}^{(k)}_{l}=\tilde{b}^{(k-1)}_{l} +2t^* \,\left[{u}^{(k-1)}_{i-1} +m_i^{(k-1)}\,\big(\tilde{b}^{(k-1)}_{l}-{b}^{(k-1)}_{i-1}\big)\right],
\end{equation}
and then set 
\[
u_l^{(k)}=u_N^{(k-1)}\left(\tilde{b}^{(k-1)}_{l}\right).
\]
Note that the breaking points $b_i^{(k-1)}, b_{i+1}^{(k-1)},\ldots,b_{l}^{(k-1)}$ merge into one breaking point $b_l^{(k)}$ at time $t_k$. 

In a similar fashion, to compute $u_{l+1}^{(k)}$, find the largest $j\ge l+1$ such that $t_{l,j} \leq 2 t^*$, and
let $\tilde{b}^{(k-1)}_{l+1}$ be the solution of the following characteristic line equation
\begin{equation} \label{char_2}
{b}^{(k)}_{l+1}=\tilde{b}^{(k-1)}_{l+1} +2t^* \,\left[{u}^{(k-1)}_{j} +m_{j+1}^{(k-1)}\,\big(\tilde{b}^{(k-1)}_{l+1}-{b}^{(k-1)}_{j}\big)\right],
\end{equation}
and then set 
    \[
    u_{l+1}^{(k)}=u_N^{(k-1)}\left(\tilde{b}^{(k-1)}_{l+1}\right).
    \]
Again, the breaking points $b_{l+1}^{(k-1)}, b_{l+2}^{(k-1)},\ldots,b_{j}^{(k-1)}$ merge into one breaking point $b_{l+1}^{(k)}$ at time $t_k$.

\subsection{ENN Method for Burgers' Equation} 

With the characteristic finite volume scheme and the time step control procedure, we are now ready to describe the ENN method for Burgers' equation.  


\begin{algorithm}[H]
\caption{ENN Method for Burgers' equation} \label{enn_nonlinear}
\begin{minipage}{.9\textwidth}
\vspace{.2cm}

Let $u^{(k-1)}_N(x)$ be the NN approximation at the previous time $t_{k-1}$ defined in (\ref{u(k-1)}), let $t_k = t_{k-1} + \tau_k$ for a prescribed time step size $\tau_k$, and proceed with the following steps:\\
\begin{enumerate}

\item Using the conditions described in Section \ref{s1}, determine $S^{(k-1)}$, which is the set of pairs of breaking points containing shocks at time $t_{k-1}$. If a shock is detected in the time interval $(t_{k-1}, t_k)$, but condition \cref{d_*} is not satisfied, truncate the time step size for that shock to 
$
\tau_k = (d_l - d^*)/ J_l^{(k-1)}
$
as described in Section \ref{sec_tsc}.\\

\item For each pair $\left(b_l^{(k-1)}, b_{l+1}^{(k-1)}\right)$ in $S^{(k-1)}$, compute $\tilde{b}_l^{(k-1)}$ and $\tilde{b}_{l+1}^{(k-1)}$ using \cref{b-tilde}. If condition \cref{assumption1} holds, compute the unknown parameters $p^{(k)} = \left(b_l^{(k)}, u_l^{(k)}, u_{l+1}^{(k)}\right)$ with Algorithm \ref{char_fv}. Otherwise, as described in Section \ref{sec_tsc}, let $t_{\text{min}}=\min \left\{ t_{l-1, l+1}, t_{l,l+2} \right\}$ and compute $p^{(k)}$ as follows:\\
\begin{itemize}
\item[(i)] If $t_{\text{min}}>2t^*$, set $\tau_k = \frac{1}{2}\left( t_{\text{min}}  + 2t^*\right)$ and compute $p^{(k)}$ with Algorithm \ref{char_fv}.
\item[(ii)] if $t_{\text{min}} \leq 2t^*$, set $\tau_k = 2t^*$, and let
\[
p^{(k)} = \left( \tilde{b}_l^{(k)}, \, u_N^{(k-1)}\left(\tilde{b}_l^{(k-1)}\right), \, u_N^{(k-1)}\left(\tilde{b}_l^{(k-1)}\right)    \right),
\]
where $\tilde{b}_l^{(k-1)}$ and $\tilde{b}_{l+1}^{(k-1)}$ are the solutions to \cref{char_1} and \cref{char_2}, respectively.\\
\end{itemize}

\item For breaking points not in $S^{(k-1)}$ that do not merge into a shock, compute the corresponding breaking points and their nodal values using Algorithm \ref{char_prop_alg1}.\\

\item Output $u_N^{(k)}$ defined in (\ref{u(k)}) as the NN approximation at time $t_k$.
\end{enumerate}
\end{minipage}
\end{algorithm}


\section{Numerical Experiments}\label{num-exp} This section presents the numerical results of the ENN method for the one-dimensional linear advection equation and the inviscid Burgers equation. For all experiments, the network approximations to the initial and boundary data are constructed to a given error tolerance using the Adaptive Network Enhancement (ANE) Method \cite{LiuCai1}.

Let $u$ be the exact solution of problem (\ref{pde}), and let $u^{(k)}_N$ be the NN approximation at time $t_k$. Tables \ref{tab_two_disc}-\ref{tab_burg_bell} report the relative numerical errors of the NN approximation at several selected times in the $L^2$ norm. Figures \ref{fig_two_disc}-\ref{fig_burg_bell} show the NN approximation and the exact solution or reference solution at several selected times, where the breaking points of the approximation marked below.  


\subsection{The Linear Advection Equation}  This section reports the results of the ENN method for the one-dimensional linear advection equation with the following spatial flux 
\[
f(u)=u. 
\]
In this case, the initial profile propagates unchanged to the right at a constant velocity. 
The numerical error at each time step is bounded by the representation error of the initial data and the boundary data as shown theoretically in (\ref{error-linear}) and numerically in Tables \ref{tab_two_disc} and \ref{tab_pw_smooth}. These tables report the relative numerical error in the $L^2$ norm at several selected times. Moreover, the ENN method approximates the discontinuous solution well with no oscillations as shown in Figures \ref{fig_two_disc} and \ref{fig_pw_smooth}. These figures depict the NN approximation with its breaking points and the exact solution.

{
\renewcommand{\arraystretch}{1.1}
\begin{table}[!htb] \label{tab_two_disc}
\setlength{\tabcolsep}{10pt}
\caption{Relative errors of the problem with discontinuous and sinusoidal initial data}
\vspace{6pt}
\begin{center}
\begin{tabular}{|l|c|c|}
\hline
Time & $\frac{\|u(\cdot, t_k)- u^{(k)}_{N}\|_{L^2(\Omega)}}{\|u(\cdot, t_k)\|_{L^2(\Omega)}}$  \\ \hline 
0.00 & $2.5688\times 10^{-2}$\\ \hline
0.25 &  $2.5687\times 10^{-2}$ \\ \hline
0.50  &  $2.5687\times 10^{-2}$ \\ \hline
\end{tabular}
\end{center}
\end{table}
}


\begin{figure}[H]\label{fig_two_disc}
\centering
\subfigure[Initial data approximation]{
\begin{minipage}[t]{0.48\linewidth}
\includegraphics[scale=.47]{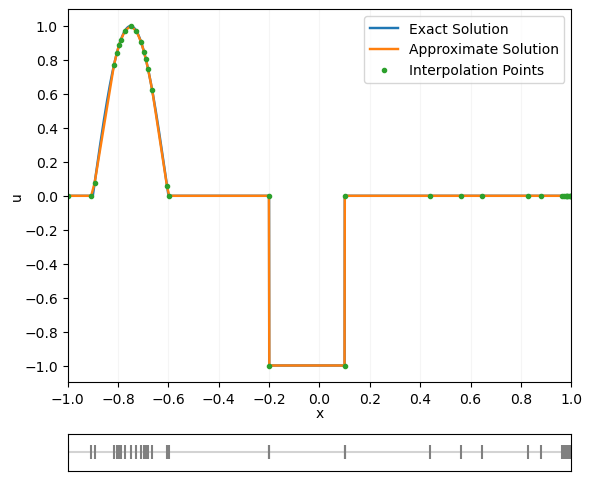}
\end{minipage}%
}%
\subfigure[Approximation at $t=0.5$]{
\begin{minipage}[t]{0.48\linewidth}
\centering
\includegraphics[scale=.47]{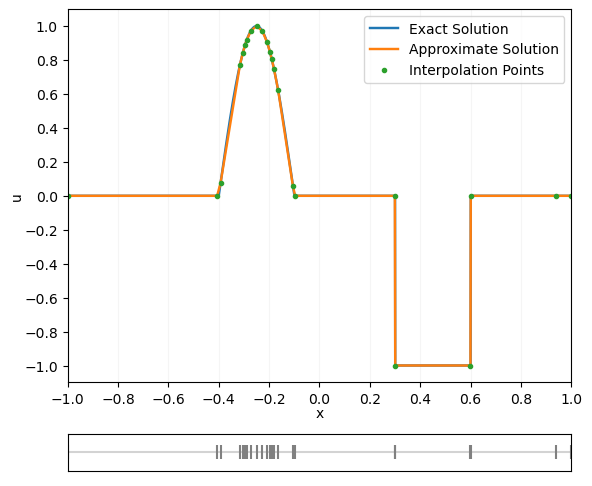}
\end{minipage}%
}%
\caption{Approximation results of the problem with discontinuous and sinusoidal initial data}
\end{figure}

\subsubsection{Discontinuous and Sinusoidal Initial Data} The second test problem has the domain $\Omega = (-1,1)$, the time interval $I = (0,0.5)$, and the inflow boundary $\Gamma_{-} = \left\{(-1, t): t \in I   \right\}$. The inflow boundary data and the initial data are given by
\[
g(x,t) = 0
\hspace{.25cm}
\text{and}
\hspace{.25cm}
u_0(x) = \left\{
\begin{array}{ll}
\dfrac{\sin(\pi (x + 0.9))}{0.3}, & -0.9 < x < -0.6,\\[3mm]
-1, & -0.2 < x < 0.1,\\[2mm]
0, & \text{ otherwise, }
\end{array}
\right.
\]
respectively. The exact solution is given by
\[
u(x,t) = \left\{
\begin{array}{ll}
\dfrac{\sin(\pi (x - t + 0.9))}{0.3}, & -0.9 < x -t < -0.6,\\[3mm]
-1, & -0.2 < x -t < 0.1,\\[2mm]
0, & \text{ otherwise }
\end{array}
\right. 
\]
for $(x,t)$ in $\Omega \times I$. The NN representation of the initial data was computed with the ANE method such that the relative numerical error in the $L^2$ norm was less than $\epsilon=3.0 \times 10^{-2}$. The resulting NN representation of the initial data had 37 neurons. 



{
\renewcommand{\arraystretch}{1.1}
\begin{table}[H]\label{tab_pw_smooth}
\setlength{\tabcolsep}{10pt}
\caption{Relative errors of the problem with a piecewise smooth solution}
\vspace{6pt}
\centering
\begin{tabular}{|c|c|c|}
\hline
Time & $\frac{\|u(\cdot, t_k)-u^{(k)}_{N}\|_{L^2(\Omega)}}{\|u(\cdot, t_k)\|_{L^2(\Omega)}}$ \\ \hline
0.00 & $8.2421\times 10^{-4}$ \\ \hline
0.25 & $6.86119\times 10^{-4}$  \\ \hline
0.50  & $8.8717\times 10^{-4}$  \\ \hline
0.75 & $6.0632\times 10^{-4}$  \\ \hline
1.00  & $5.2592\times 10^{-4}$ \\ \hline
\end{tabular}
\end{table}
}

\begin{figure}[H]\label{fig_space_time_pw_smooth}
\centering
\includegraphics[scale=.7]{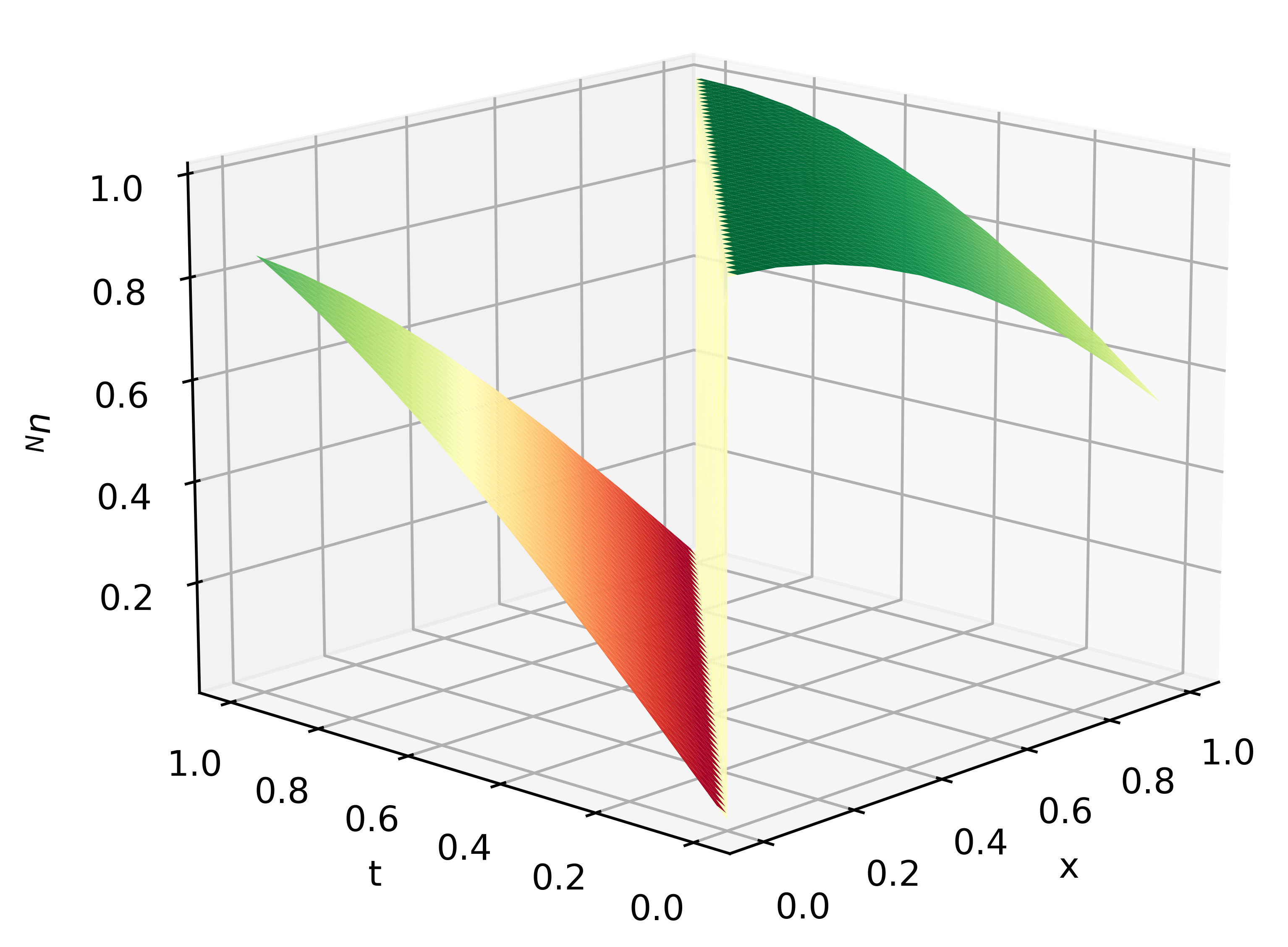}
\caption{Approximation results of the problem with a piecewise smooth solution in $\Omega \times I$}
\end{figure}

\subsubsection{Piecewise Smooth Solution} The third test problem has 
the domain $\Omega = (0,1)$, the time interval $I = (0,0.5)$, and the inflow boundary $\Gamma_{-} = \left\{(-1, t): t \in I \right\} $. The inflow boundary data is given by $g(x,t) = \sin (t)$, and the initial data is given by
$
u_0(x) = \cos(x).
$
The exact solution is given by
\[
u(x,t) = \left\{
\begin{array}{ll}
\sin(t-x) , & x<t, \\[2mm]
\cos (x-t), & x>t
\end{array}
\right.
\]
for $(x,t)$ in $\Omega \times I$.

The NN representations of the initial data and the boundary data were computed with the ANE method such that the relative numerical error in the $L^2$ norm was less than $\epsilon=3.0 \times 10^{-3}$. 
 
The relative numerical error of the approximation to the boundary data in the $L^2$ norm was $5.5285\times 10^{-4}$. The NN approximation to the initial data had 16 neurons and the NN approximation to the boundary data had 14 neurons. Figure \ref{fig_space_time_pw_smooth} shows the NN approximations in $\Omega \times I$. 

\begin{figure}[H] \label{fig_pw_smooth}
\centering

\subfigure[Boundary data approximation]{
\begin{minipage}[t]{0.47\linewidth}
\centering
\includegraphics[scale = .45]{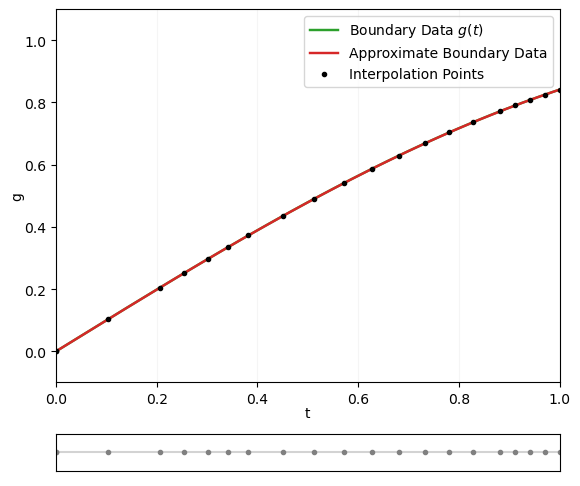}
\end{minipage}
}
\subfigure[Initial data approximation]{
\begin{minipage}[t]{0.47\linewidth}
\centering
\includegraphics[scale=.45]{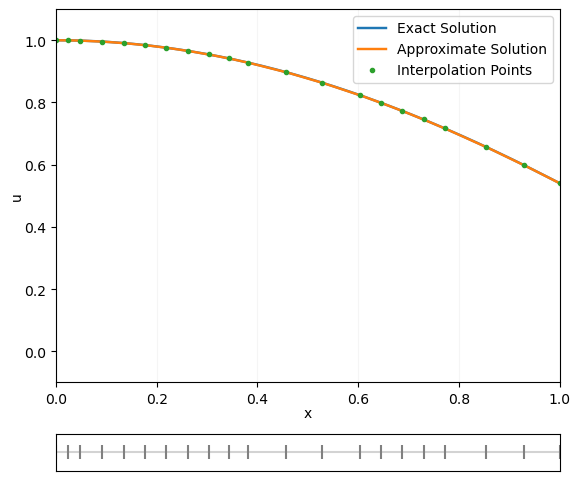}
\end{minipage}%
}%

\subfigure[Approximation at $t=0.25$]{
\begin{minipage}[t]{0.47\linewidth}
\centering
\includegraphics[scale = .45]{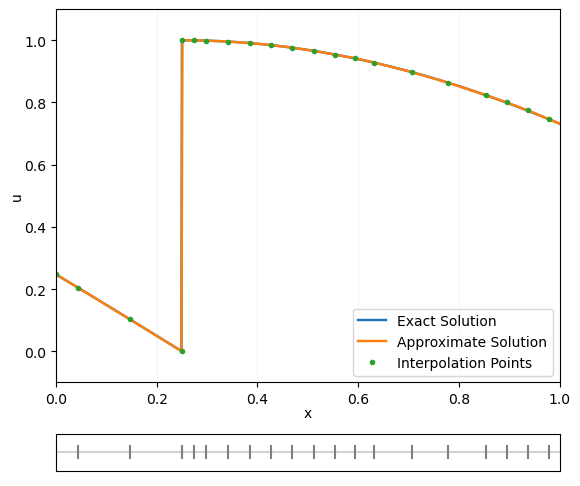}
\end{minipage}
}
\subfigure[Approximation at $t=0.75$]{
\begin{minipage}[t]{0.47\linewidth}
\centering
\includegraphics[scale=.45]{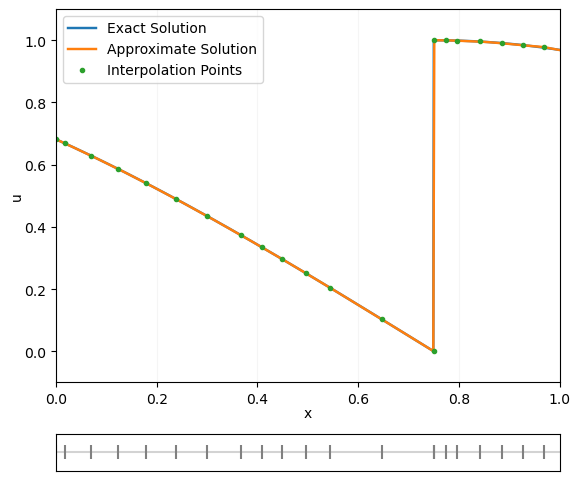}
\end{minipage}%
}%

\caption{Approximation results of the problem with a piecewise smooth solution}
\end{figure}


\subsection{Inviscid Burgers' Equation} This section reports the results of the ENN method for the one-dimensional inviscid Burgers equation, which has the following quadratic spatial flux 
\[
f(u) = \frac{1}{2}u^2.
\]
In this case, characteristic lines can intersect causing a shock formation. For these test problems, the ENN method employs the characteristic finite volume scheme to compute the shock location and jump. The time step control procedure described in Section \ref{sec_tsc} is used to capture dramatic changes in the solution and to merge any breaking points that intersect the shock interface. 

Because the solutions to the problems defined in Sections \ref{burg-sine} and \ref{burg-exp} are unknown, benchmark reference solutions are used to measure the quality of the NN approximations. Each benchmark reference solution, denoted by $\tilde{u}$, is computed using the third-order accurate WENO scheme and the fourth-order Runge-Kutta method on a fine mesh. The mesh size used to compute the reference solution was $\Delta x = 10^{-3}$ and $\Delta t = 2.0 \times 10^{-4}$ in the spatial and temporal directions, respectively. Because the prescribed 

Tables \ref{tab_rp_rarefaction}, \ref{tab_burg_sine}, and \ref{tab_burg_bell} report the relative numerical error in the $L^2$ norm at several selected times. Tables \ref{tab_burg_sine} and \ref{tab_burg_bell} also report the number of breaking points of the NN approximation at selected times for problems that implement time step control and merge breaking points that intersect the shock interface. Tables \ref{computation-1} and \ref{computation-2} report the number of mesh points and time steps required to compute the numerical approximation 
at the final times using the ENN method and the WENO scheme.

Moreover, the ENN method can accurately capture a shock's speed and height as shown in Figures \ref{fig_burg_sine} and \ref{fig_burg_bell}. Figure \ref{fig_rarefaction} demonstrates that the NN approximation computes the vanishing viscosity solution without special treatment. All figures in this section depict the NN approximation with its breaking points and the exact solution.

{
\renewcommand{\arraystretch}{1.1}
\begin{table}[!htb]\label{tab_rp_rarefaction}
\setlength{\tabcolsep}{10pt}
\caption{Relative errors of the Riemann problem with rarefaction for Burgers' equation}
\vspace{6pt}
\begin{center}
\begin{tabular}{|c|c|c|}
\hline
Time & $\frac{\|u(\cdot, t_k)- u^{(k)}_{N}\|_{L^2(\Omega)}}{\|u(\cdot, t_k)\|_{L^2(\Omega)}}$  \\ \hline 
0.0 & $1.9775 \times 10^{-2}$ \\ \hline
0.1 &  $1.6860\times 10^{-3}$\\ \hline
0.2  &  $1.2393\times 10^{-3}$ \\ \hline
0.3  & $1.0538\times 10^{-3}$ \\ \hline
0.4  & $9.5347\times 10^{-4}$ \\ \hline
0.5  & $8.9459\times 10^{-4}$ \\ \hline
\end{tabular}
\end{center}
\end{table}
}


\begin{figure}[!htb]\label{fig_rarefaction}
\centering
\subfigure[Approximation at $t=0.0$]{
\begin{minipage}[t]{0.48\linewidth}
\includegraphics[scale = .47]{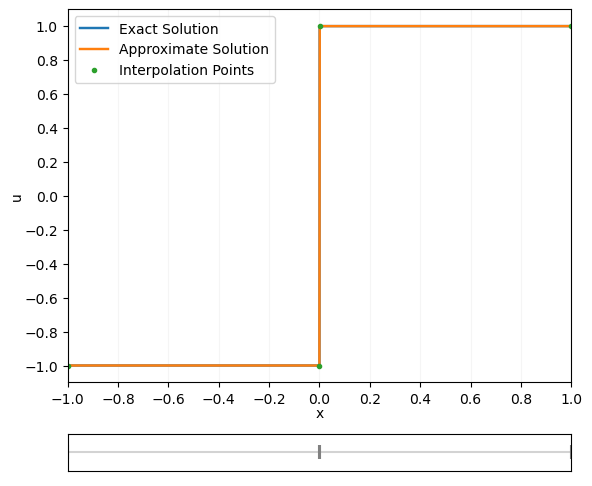}
\end{minipage}%
}
\subfigure[Approximation at $t=0.5$]{
\begin{minipage}[t]{.48\linewidth}
\includegraphics[scale=.47]{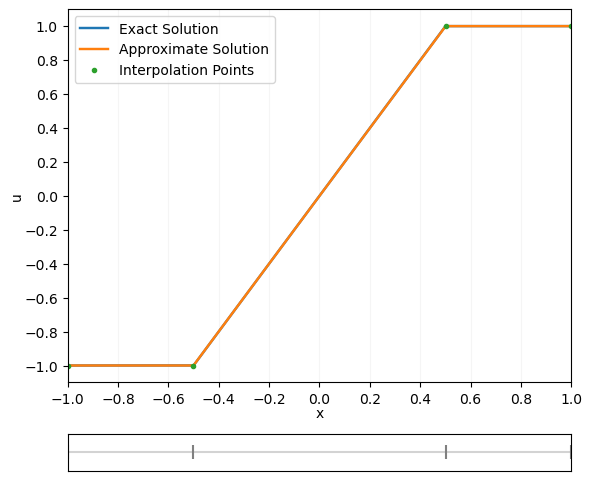}
\end{minipage}%
}%

\caption{Approximation results of the Riemann problem with rarefaction for Burgers' equation}
\end{figure}

\subsubsection{Riemann Problem with Rarefaction} The second test problem is the Riemann problem with the following initial data
\[
u(x,0) = \left\{
\begin{array}{rl}
-1, & x<0,\\[2mm]
1, & x \geq 0.
\end{array}
\right.
\]
The domain is $\Omega = (-1,1)$ and the time interval is $(0, 0.5)$. The exact solution is given by
\[
u(x,t) = \left\{
\begin{array}{rl}
-1, & x< -t,\\[2mm]
x/t, & -t\leq x \leq t,\\[2mm]
1, & x > t
\end{array}
\right.
\]
for $(x,t)$ in $\Omega \times I$.

The NN representation of the initial data was computed with the ANE method such that the relative numerical error in the $L^2$ norm was less than $\epsilon=3.0 \times 10^{-2}$. This resulted in a NN approximation with 4 neurons. Because the approximation to the initial data is continuous, there is a unique weak solution to (\ref{pde}) with the initial data $u_0(x) = u_N^{(0)}(x)$. Hence, characteristic propagation accurately computes the vanishing viscosity solution.


\subsubsection{Sinusoidal Initial Data}\label{burg-sine} The third test problem is equation (\ref{pde}) with the domain $\Omega = (0,1)$, the time interval $I = (0,1)$, and the following sinusoidal initial data
\[
u_0(x) = \sin(2 \pi x).
\]
The NN representation of the initial data was computed with the ANE method such that the relative numerical error in the $L^2$ norm was less than $\epsilon=10^{-3}$. As highlighted in table \ref{computation-1},  the ENN method required $587$ time steps and fewer than $78$ breaking points to compute the solution. Whereas, the WENO scheme for the reference solution took $2500$ time steps with $1000$ mesh points. 

This test problem demonstrates that the ENN method accurately approximates problems with changing shock heights (see Figure \ref{fig_burg_sine}).
As seen in Table \ref{tab_burg_sine} and Figure \ref{fig_burg_sine}, the number of breaking points decrease as the shock forms and propagates. After the shock forms at time $1/(2\pi)$, Table \ref{tab_burg_sine} indicates that the relative error jumps two orders of magnitude.


{
\renewcommand{\arraystretch}{1.1}
\begin{table}[!htb]\label{tab_burg_sine}
\setlength{\tabcolsep}{10pt}
\caption{Relative errors and the number of breaking points for Burgers' equation with sinusoidal initial data}
\vspace{6pt}
\begin{center}
\begin{tabular}{|c|c|c|c|}
\hline
Time & $\frac{\|\tilde{u}(\cdot, t_k)- u^{(k)}_{N}\|_{L^2(\Omega)}}{\|\tilde{u}(\cdot, t_k)\|_{L^2(\Omega)}}$  & $n_k$ \\ \hline 
0.0 & $6.6923\times 10^{-4}$ & 78\\ \hline
0.1 &  $7.8352\times 10^{-4}$ & 78\\ \hline
0.2  &  $4.0166\times 10^{-2}$ & 56\\ \hline
0.3 & $5.1491\times 10^{-2}$  & 38 \\ \hline
0.4  & $5.3515\times 10^{-2}$ & 30\\ \hline
0.5  & $5.4162\times 10^{-2}$ & 25\\ \hline
\end{tabular}
\end{center}
\end{table}
}

{
\renewcommand{\arraystretch}{1.2}
\begin{table}[!htb]\label{computation-1}
\setlength{\tabcolsep}{10pt}
\caption{Cost to compute the solution to Burgers' equation with sinusoidal initial data at the final time using the ENN method and the WENO scheme}
\vspace{6pt}
\begin{center}
\begin{tabular}{|c|c|c|}
\hline
Method & \# Mesh points & \# Time steps \\ \hline
ENN &  $<78$  & $587$  \\ \hline
WENO &  $1000$  & $2500$ \\ \hline
\end{tabular}
\end{center}
\end{table}
}


\begin{figure}[p]\label{fig_burg_sine}
\centering
\subfigure[Approximation at $t=0.0$]{
\begin{minipage}[t]{0.48\linewidth}
\includegraphics[scale = .47]{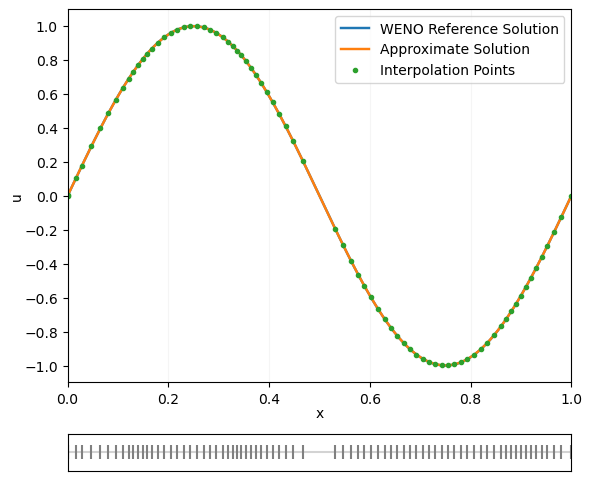}
\end{minipage}%
}
\subfigure[Approximation at $t=0.1$]{
\begin{minipage}[t]{.48\linewidth}
\includegraphics[scale=.47]{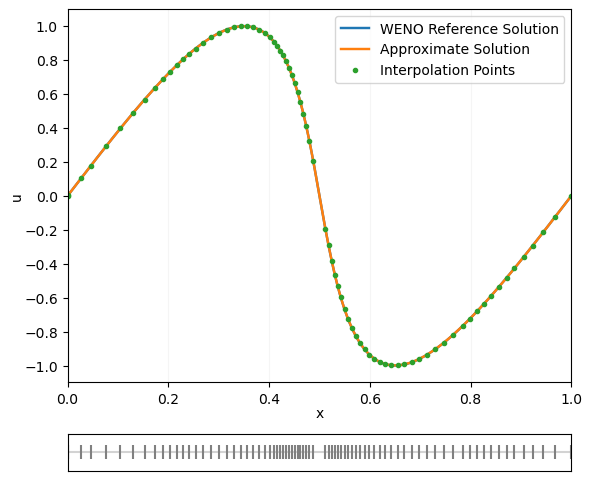}
\end{minipage}%
}%

\subfigure[Approximation at $t=0.2$]{
\begin{minipage}[t]{0.48\linewidth}
\includegraphics[scale = .47]{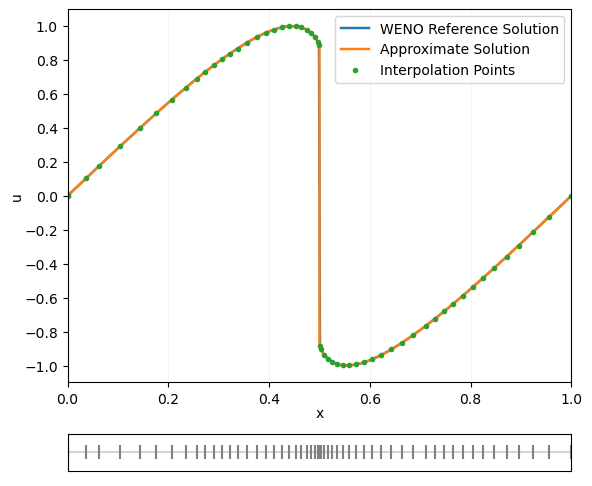}
\end{minipage}%
}
\subfigure[Approximation at $t=0.3$]{
\begin{minipage}[t]{.48\linewidth}
\includegraphics[scale=.47]{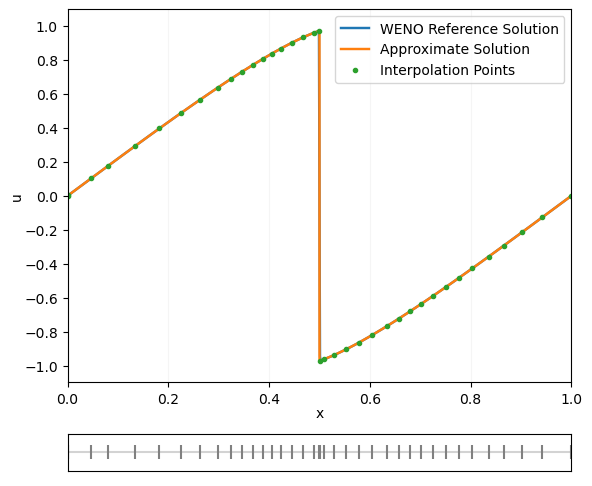}
\end{minipage}%
}%

\subfigure[Approximation at $t=0.4$]{
\begin{minipage}[t]{0.48\linewidth}
\includegraphics[scale = .47]{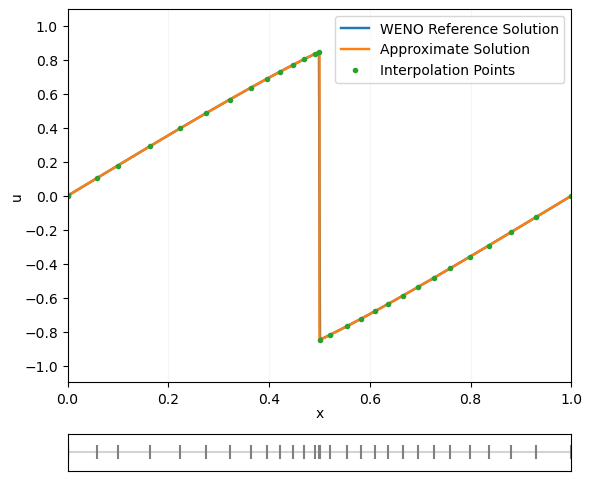}
\end{minipage}%
}
\subfigure[Approximation at $t=0.5$]{
\begin{minipage}[t]{.48\linewidth}
\includegraphics[scale=.47]{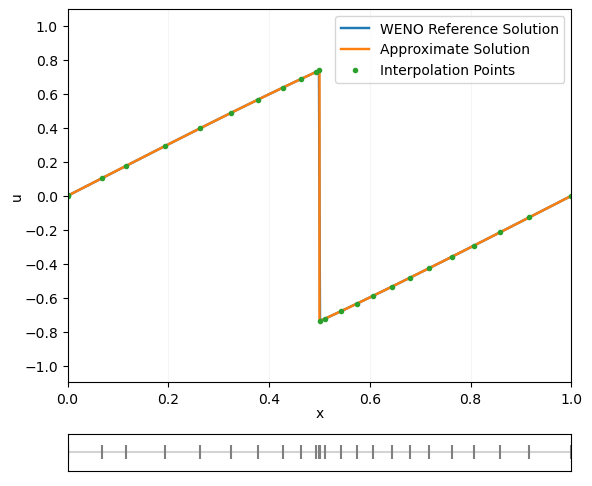}
\end{minipage}%
}%

\caption{Approximation results of Burgers' equation with sinusoidal initial data}
\end{figure}


\subsubsection{Exponential Initial Data}\label{burg-exp} The third test problem is equation (\ref{pde}) with the domain $\Omega = (-1,1)$, the time interval $I = (0,1)$, and the following initial data
\[
u_0(x) = \exp (-16x^2).
\]
The NN representation of the initial data was computed with the ANE method such that the relative numerical error in the $L^2$ norm was less than $\epsilon=10^{-3}$. 

As demonstrated in Table \ref{computation-2}, The ENN method approximates the solution accurately with $418$ time steps and fewer than $83$ breaking points. Whereas, the WENO scheme uses $5000$ time steps with $2000$ mesh points to compute the reference solution. As in the previous test problem, the number of breaking points decreases as the shock forms (see Table~\ref{tab_burg_bell} and Figure~\ref{fig_burg_bell}). After the shock forms at time $\sqrt{32e}/32$, Table~\ref{tab_burg_bell} indicates that the relative error jumps two order of magnitude.


{
\renewcommand{\arraystretch}{1.1}
\begin{table}[!htb]\label{tab_burg_bell}
\setlength{\tabcolsep}{10pt}
\caption{Relative errors and the number of breaking points for Burgers' equation with exponential initial data}
\vspace{6pt}
\begin{center}
\begin{tabular}{|c|c|c|c|}
\hline
Time & $\frac{\|\tilde{u}(\cdot, t_k)- u^{(k)}_{N}\|_{L^2(\Omega)}}{\|\tilde{u}(\cdot, t_k)\|_{L^2(\Omega)}}$ & $n_k$ \\ \hline 
0.0 & $6.6207\times 10^{-4}$ & 83 \\ \hline
0.2 &  $7.2902\times 10^{-4}$ & 83\\ \hline
0.4  &  $1.2718 \times 10^{-2}$ & 61 \\ \hline
0.6 & $2.1803\times 10^{-2}$ & 47 \\ \hline
0.8  & $2.0423\times 10^{-2}$ & 40 \\ \hline
1.0  & $1.4822\times 10^{-2}$ & 37\\ \hline
\end{tabular}
\end{center}
\end{table}
}

{
\renewcommand{\arraystretch}{1.2}
\begin{table}[!htb]\label{computation-2}
\setlength{\tabcolsep}{10pt}
\caption{Cost to compute the solution to Burgers' equation with exponential initial data at the final time using the ENN method and the WENO scheme}
\vspace{6pt}
\begin{center}
\begin{tabular}{|c|c|c|}
\hline
Method & \# Mesh points & \# Time steps \\ \hline
ENN &  $<83$  & $418$  \\ \hline
WENO &  $2000$  & $5000$ \\ \hline
\end{tabular}
\end{center}
\end{table}
}


\begin{figure}[p] \label{fig_burg_bell}
\centering
\subfigure[Approximation at $t=0.0$]{
\begin{minipage}[t]{0.48\linewidth}
\includegraphics[scale = .47]{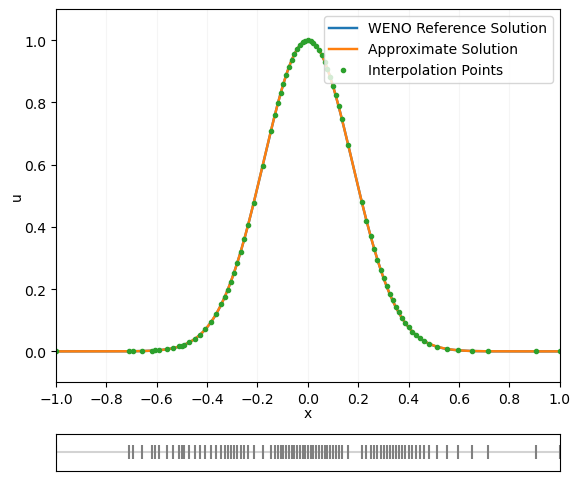}
\end{minipage}%
}
\subfigure[Approximation at $t=0.3$]{
\begin{minipage}[t]{.48\linewidth}
\includegraphics[scale=.47]{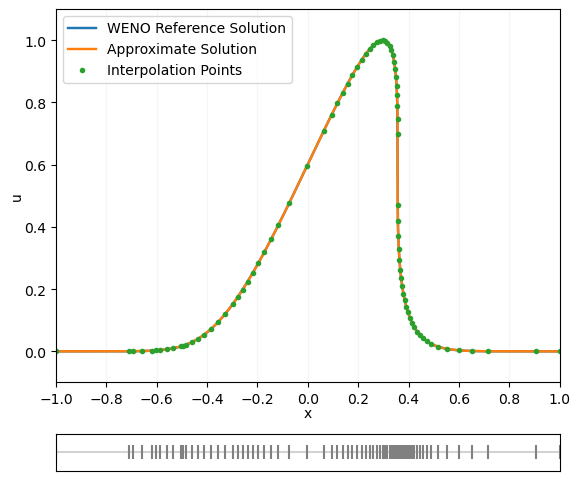}
\end{minipage}%
}%

\subfigure[Approximation at $t=0.4$]{
\begin{minipage}[t]{0.48\linewidth}
\includegraphics[scale = .47]{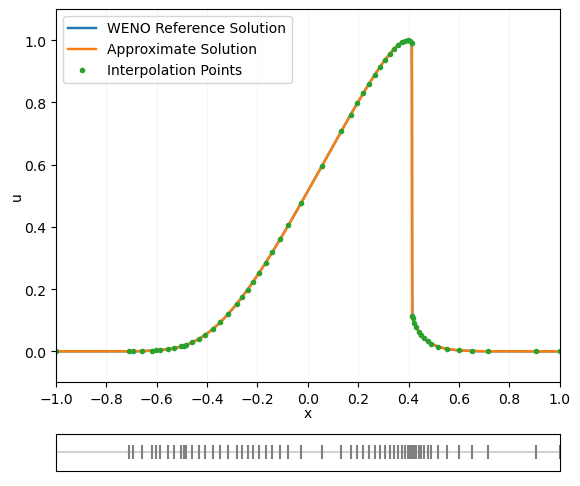}
\end{minipage}%
}
\subfigure[Approximation at $t=0.5$]{
\begin{minipage}[t]{.48\linewidth}
\includegraphics[scale=.47]{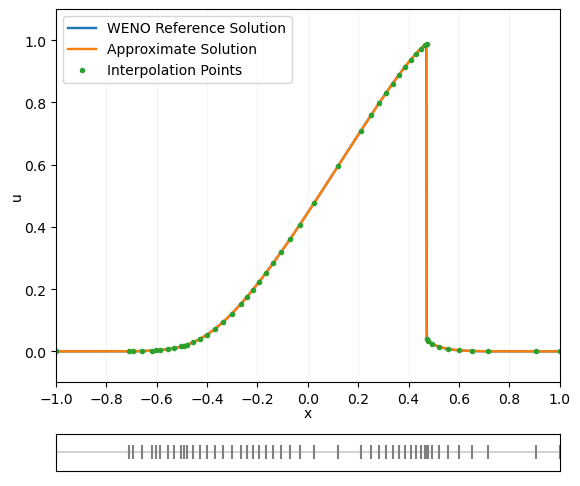}
\end{minipage}%
}%

\subfigure[Approximation at $t=0.6$]{
\begin{minipage}[t]{0.48\linewidth}
\includegraphics[scale = .47]{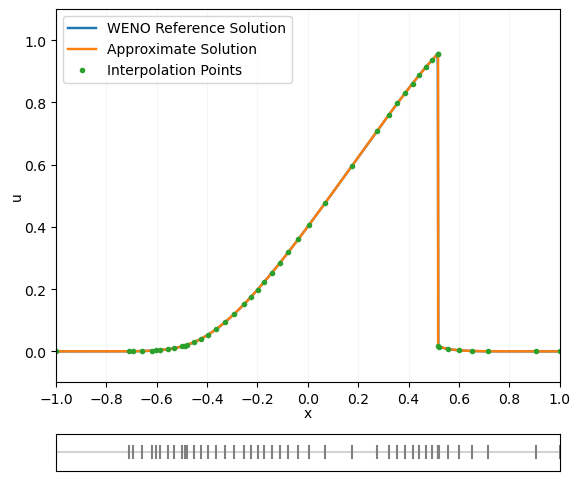}
\end{minipage}%
}
\subfigure[Approximation at $t=0.7$]{
\begin{minipage}[t]{.48\linewidth}
\includegraphics[scale=.47]{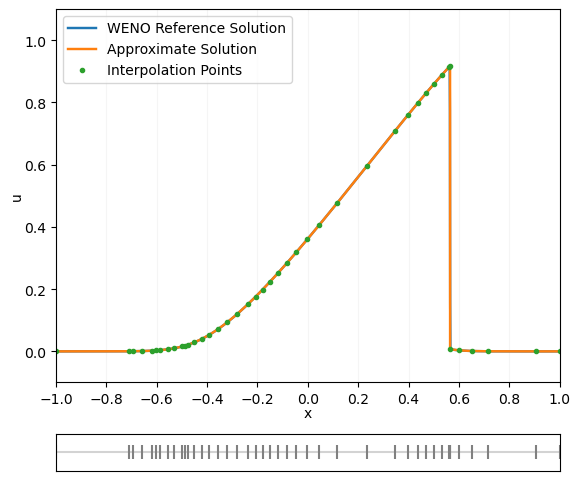}
\end{minipage}%
}%

\caption{Approximation results of Burgers' equation with exponential initial data}
\end{figure}

\section{Conclusions and Discussions}\label{conclusion}

The proposed ENN method in this paper emulates the physical phenomena of hyperbolic conservation laws by evolving a neural network representation of the initial data with the addition of boundary data. This evolution follows characteristic lines if there is no shock formation; otherwise, it is supplemented by the characteristic finite volume scheme introduced in Section~\ref{ENN-nonlinear} 
and proper time step control.

It is shown numerically that the ENN method can accurately simulate problems with contact discontinuities (e.g., the linear advection equation with discontinuous initial or boundary data) or a rarefaction wave (e.g., the Riemann problem for Burgers' equation) with ease. In this case, the ENN method only uses characteristic propagation. With the augmentation of automatic time step control and the characteristic finite volume scheme in shock regions, the ENN method can accurately replicate shock formation without common numerical artifacts such as oscillation, smearing, etc. This is demonstrated numerically for Burgers' equation with piecewise constant, sinusoidal, and exponential initial data. 

At each time step, the computational cost of the ENN method is proportional to the number of breaking points used for approximating the initial data. Hence, it is several orders of magnitude less than that of the existing explicit numerical schemes. The time step size is automatically chosen to capture the critical changes in the evolution of the solution. 
Specifically, if there is no shock formation, the step size has no restriction; otherwise, the time step size adapts to speed of shock formation.

Theoretically, it is proved that the ENN method at each time step preserves the accuracy of the initial and boundary data approximations for the linear advection problem. The error analysis for the inviscid Burgers equation will be presented in the companion paper \cite{ENN2}.

For general spatial fluxes and even for the linear advection problem with variable velocity, the initial profile of the underlying problem can deform dramatically as time evolves. If this phenomenon occurs, the ENN method in its current form 
is not able to approximate the solution accurately. This drawback of the ENN method will be investigated and resolved in a forthcoming paper.
\bibliographystyle{ieee}
\bibliography{ref}

\end{document}